\documentclass[a4paper,11pt, reqno]{amsart}
\usepackage{latexsym}
\usepackage{amssymb,amsfonts,amsmath,mathrsfs}
\addtolength{\textwidth}{3 truecm}
\addtolength{\textheight}{1 truecm}
\setlength{\voffset}{-0.6 truecm}
\setlength{\hoffset}{-1.3 truecm}

\newtheorem{theorem}{Theorem}[section]

\newtheorem{lemma}{Lemma}[section]
\newtheorem{coro}{Corollary}[section]

\begin{document} 
\title [Non-vanishing of Dirichlet $L$-functions at the central point]{Non-vanishing of Dirichlet $L$-functions at the central point} 

\author{H. M. Bui}
\address{Mathematical Institute, University of Oxford, Oxford, OX1 3LB UK}
\email{hung.bui@maths.ox.ac.uk}
\thanks{The author is supported by an EPSRC Postdoctoral Fellowship.}

\subjclass[2000]{Primary 11M06, 11M26}

\begin{abstract} 
Let $\chi$ be a primitive Dirichlet character modulo $q$ and $L(s,\chi)$ be the Dirichlet $L$-function associated to $\chi$. Using a new two-piece mollifier we show that $L(\tfrac{1}{2},\chi)\ne0$ for at least $34\%$ of the characters in the family. 
\end{abstract} 

\maketitle 

\section{Introduction}

The behavior of $L$-functions and their derivatives inside the critical strip is a very important topic in number theory. In this paper, we study the order of vanishing of Dirichlet $L$-functions at the central point, $s=\tfrac{1}{2}$. Let $\chi$ be a primitive character modulo $q$, and denote by $L(s,\chi)$ the associated $L$-function. It is widely believed that $L(\tfrac{1}{2},\chi)\neq 0$ for all $\chi$. For quadratic characters $\chi$, this appears to have been first conjectured by Chowla [\textbf{\ref{Ch}}].

Balasubramanian and Murty [\textbf{\ref{BM}}] were the first to prove that a positive, though very small, proportion of Dirichlet $L$-functions in the family of primitive characters, to a sufficiently large prime modulus $q$, do not vanish at $s=\tfrac{1}{2}$. This was subsequently improved by Iwaniec and Sarnak [\textbf{\ref{IS}}], who showed that at least $1/3$ of the central values $L(\tfrac{1}{2},\chi)$ are non-zero for all sufficiently large $q$. In this article, we give a modest improvement to the latter result. Our main theorem is

\begin{theorem}
We have
\begin{equation*}
{\sum_{\substack{\chi \ \!\!(\!\!\!\!\!\!\mod\ \!\!q)\\ L({\scriptstyle{\frac{1}{2}}},\chi)\ne0}}{\!\!\!\!\!\!}}^{*} \ 1 \ \geq \big(0.3411+o(1)\big) {\sum_{\chi \ \!\!(\!\!\!\!\!\!\mod\ \!\!q)}{\!\!\!\!\!\!}}^{*} \ 1,
\end{equation*}
where $\sum^{*}$ denotes that the summation is restricted to primitive characters $\chi$.
\end{theorem}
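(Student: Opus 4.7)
The approach is the classical mollifier method. For any Dirichlet polynomial $M(\chi)$ of length a small power of $q$, the Cauchy--Schwarz inequality yields
\begin{equation*}
\Bigg|\sideset{}{^{*}}\sum_{\chi\pmod q} L(\tfrac{1}{2},\chi)M(\chi)\Bigg|^{2} \;\leq\; \sideset{}{^{*}}\sum_{\substack{\chi\pmod q\\ L(1/2,\chi)\neq 0}} 1 \ \cdot\ \sideset{}{^{*}}\sum_{\chi\pmod q} \big|L(\tfrac{1}{2},\chi)M(\chi)\big|^{2},
\end{equation*}
so it suffices to evaluate the mollified first and second moments
\begin{equation*}
\mathcal{M}_{1}=\sideset{}{^{*}}\sum_{\chi\pmod q} L(\tfrac{1}{2},\chi)M(\chi), \qquad \mathcal{M}_{2}=\sideset{}{^{*}}\sum_{\chi\pmod q}\big|L(\tfrac{1}{2},\chi)M(\chi)\big|^{2},
\end{equation*}
and to choose $M$ to make the ratio $|\mathcal{M}_{1}|^{2}/\mathcal{M}_{2}$ as large as possible. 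The Iwaniec--Sarnak bound $1/3$ corresponds to the one-piece choice with coefficients $\mu(n)P(\log(y/n)/\log y)/\sqrt{n}$ and $y=q^{1/2-\varepsilon}$.

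To push past $1/3$ my plan, following the title, is to take a two-piece mollifier $M=M_{(1)}+M_{(2)}$ in which $M_{(1)}$ is the classical Iwaniec--Sarnak polynomial of length $y_{1}=q^{\vartheta_{1}-\varepsilon}$ with a polynomial $P_{1}$ satisfying $P_{1}(0)=0$, and $M_{(2)}$ is a structurally different mollifier of length $y_{2}=q^{\vartheta_{2}-\varepsilon}$ with its own polynomial $P_{2}$. A natural candidate for the second piece is a convolution mollifier
\begin{equation*}
M_{(2)}(\chi)\;=\;\sum_{hk\leq y_{2}}\frac{\mu(h)\,a(k)\,\chi(hk)}{\sqrt{hk}}\,P_{2}\!\!\left(\frac{\log(y_{2}/hk)}{\log y_{2}}\right),
\end{equation*}
where $a(k)$ is a short arithmetic weight (for instance the Dirichlet coefficients of $\zeta(s)^{-2}$, supported on $k\leq q^{\delta}$ for some small $\delta$). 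The role of $M_{(2)}$ is to introduce extra degrees of freedom into the eventual variational problem, which is the source of the numerical gain.

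The computation then splits into three stages. First, $\mathcal{M}_{1}$ is evaluated by orthogonality of primitive characters modulo $q$: only $n\equiv\pm 1\pmod q$ contributes, and the main term reduces to an explicit integral in $P_{1}$ and $P_{2}$. Second, $\mathcal{M}_{2}$ is decomposed as a sum of four pieces $\sideset{}{^{*}}\sum |L|^{2}M_{(i)}\overline{M_{(j)}}$ with $i,j\in\{1,2\}$; each is evaluated by inserting the approximate functional equation for $|L(\tfrac{1}{2},\chi)|^{2}$ and applying orthogonality, producing a diagonal main term given by a quadratic form in $P_{1},P_{2}$ and off-diagonal contributions controlled by the large sieve and Poisson summation along the lines of Iwaniec--Sarnak. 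Third, the ratio $|\mathcal{M}_{1}|^{2}/\mathcal{M}_{2}$ is optimized by calculus of variations over admissible pairs $(P_{1},P_{2})$ (and over $\vartheta_{1},\vartheta_{2}$ subject to what the second-moment analysis can handle) to extract the advertised constant $0.3411$.

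The main obstacle will be the cross term $\sideset{}{^{*}}\sum |L(\tfrac{1}{2},\chi)|^{2}M_{(1)}(\chi)\overline{M_{(2)}(\chi)}$: its coefficient is a triple convolution of $\mu$-type functions, and one must carry out enough Dirichlet-series manipulation to extract a clean main term while verifying that the off-diagonal remainders remain below the main contribution for the chosen lengths $y_{1},y_{2}$. Once this cross moment is in closed form, the final numerical optimization should exhibit a strict improvement over the single-piece bound of Iwaniec--Sarnak, yielding the constant $0.3411$.
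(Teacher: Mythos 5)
Your high-level strategy (Cauchy--Schwarz, two-piece mollifier $M = M_{(1)} + M_{(2)}$, evaluate first and second mollified moments, optimize) matches the paper, and the choice $M_{(1)}$ is the same Iwaniec--Sarnak piece. But your proposed second piece is wrong in a way that destroys the entire plan, and this is precisely the obstacle the paper is built around.

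The problem is that your $M_{(2)}(\chi) = \sum_{hk \leq y_2} \mu(h) a(k) \chi(hk) P_2(\cdot)/\sqrt{hk}$ is a Dirichlet polynomial in $\chi$ alone: writing $c_m = \sum_{hk=m} \mu(h) a(k) P_2(\log(y_2/m)/\log y_2)$, it is simply $\sum_{m\leq y_2} c_m \chi(m)/\sqrt{m}$. Consequently $M_{(1)} + M_{(2)}$ is again of the general form $\sum_{m\leq y} x_m \chi(m)/\sqrt{m}$ considered by Iwaniec and Sarnak. Their optimization was carried out over \emph{all} such real coefficient vectors $(x_m)$ subject to the length restriction $y < q^{1/2}$, and the supremum of $|S_1|^2/S_2$ over that class is exactly $1/3$. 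The paper makes exactly this point when dismissing Feng's mollifier: adding any further polynomial in $\chi$ alone is ``already included in the general mollifier \eqref{26} of Iwaniec and Sarnak'' and therefore cannot beat $1/3$. Adding degrees of freedom inside the same convex optimization problem does not change its optimum, so the final numerical optimization you describe would just reproduce $1/3$, not $0.3411$.

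The missing idea is that the second mollifier must be structurally outside the Iwaniec--Sarnak class. The paper's $\psi_2(\chi) = L^{-1}\sum_{mn\leq y_2} (\log*\mu)(m)\mu(n)\,\overline{\chi}(m)\chi(n)\, Q[mn]/\sqrt{mn}$ involves both $\chi$ and $\overline{\chi}$ simultaneously, motivated by the informal identity $1/L(\tfrac12,\chi) = L(\tfrac12,\overline{\chi})/\bigl(L(\tfrac12,\chi)L(\tfrac12,\overline{\chi})\bigr)$. This is a genuinely different object: when paired with $L(\tfrac12+\alpha,\chi)L(\tfrac12+\beta,\overline{\chi})$ in the second moment the $\overline{\chi}$ part couples nontrivially with the $L$-factors and produces cross-term integrals in $P$ and $Q$ that the pure-$\chi$ mollifier cannot generate. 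Crucially it also avoids the root factor $\varepsilon_\chi$ (which, as in Michel--VanderKam, would force $\vartheta_1+\vartheta_2 < \tfrac12$ and again give no improvement). Without the $\overline{\chi}$-dependence your computation, even if executed flawlessly, is mathematically guaranteed to land back at $1/3$.
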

 
Our method is to use a new two-piece mollifier, i.e. the sum of two mollifiers of different shapes. As discussed in [\textbf{\ref{BCY}}], it requires a significant amount of work in studying this and other problems involving two-piece mollifiers, especially when the second mollifier is much more complicated than the usual one (see the definitions in the next section). However, in foreseeing how much improvement can be obtained, one can use some heuristic arguments from the ratios conjecture [\textbf{\ref{CFZ}}] to express various mollified moments of $L$-functions as certain multiple contour integrals. For a variety of examples of such calculations, see [\textbf{\ref{CS}}].
 
There has been a great deal of attention and extensive literature on the non-vanishing of various families of $L$-functions at the center of the critical strip. Most notably, in connection to this work we mention a result of Soundararajan [\textbf{\ref{S2}}], who showed that $L(\tfrac{1}{2},\chi)\neq 0$ for at least $7/8$ of the quadratic characters modulo $q$. For results concerning the central values of high derivatives of $L$-functions, see [\textbf{\ref{KMV}},\textbf{\ref{MV}},\textbf{\ref{BM1}}].  

\subsection{Notation}

For simplicity we only consider even Dirichlet characters $\chi$(mod $q$), i.e. such that $\chi(-1)=1$. The case of odd characters, $\chi(-1)=-1$, is similar. We let $\mathscr{C}_q$ denote the set of primitive characters (mod $q$) and let $\mathscr{C}_{q}^{+}$ denote the subset of characters in $\mathscr{C}_q$ which are even. We put $\varphi^{+}(q) = \frac{1}{2}\varphi^{*}(q)$ where
$$  \varphi^{*}(q) = \sum_{q=dr} \mu(d)\varphi(r)= \big| \mathscr{C}_q\big|.$$
It is not difficult to show that $\big|\mathscr{C}_{q}^{+}\big|= \varphi^{+}(q)+O(1)$. In addition, we write $\sum_{\chi \ \!\!(\textrm{mod}\ \!q)}^{+}$ to indicate that the summation is restricted to $\chi \in \mathscr{C}_{q}^{+}$ and we write $\sum_{a(\textrm{mod}\ q)}^{*}$ to indicate that the summation is restricted to the residues $a(\textrm{mod}\ q)$ which are coprime to $q$. Throughout the paper, we denote $L=\log q$, $y=q^\vartheta$, $y_1=q^{\vartheta_1}$, $y_2=q^{\vartheta_2}$, $P[m]=P(\tfrac{\log y_1/m}{\log y_1})$ and $Q[m]=Q(\tfrac{\log y_2/m}{\log y_2})$, where $P(x)$ and $Q(x)$ are two polynomials satisfying $P(0)=Q(0)=0$.

\section{Mollifier method}

\subsection{Various mollifiers}

We first illustrate the idea of Iwaniec and Sarnak [\textbf{\ref{IS}}]. To each character $\chi\in\mathscr{C}_{q}^{+}$ we associate the function 
\begin{equation}\label{26}
M(\chi)=\sum_{m\leq y}\frac{x_m\chi(m)}{\sqrt{m}},
\end{equation}
where $X=(x_m)$ is a sequence of real numbers supported on $1\leq m\leq y$ with $x_1=1$ and $x_m\ll1$. The purpose of the function $M(\chi)$ is to smooth out or ``mollify'' the large values of $L(\tfrac{1}{2},\chi)$ as we average over $\chi \in \mathscr{C}_{q}^{+}$. Consider
\begin{eqnarray*}
S_1(M)={\sum_{\chi(\text{mod }q)}\!\!\!\!}^+ \ L(\tfrac{1}{2},\chi)M(\chi)\quad\textrm{and}\quad S_2(M)={\sum_{\chi(\text{mod }q)}\!\!\!\!}^+ \ \big|L(\tfrac{1}{2},\chi)M(\chi)\big|^2.
\end{eqnarray*}
The Cauchy inequality implies that
\begin{equation}\label{25}
{\sum_{\substack{\chi(\text{mod }q) \\ L({\scriptstyle{\frac{1}{2}}},\chi)\neq 0}}\!\!\!\!\!}^+  \ 1 \ \geq \ \frac{\big|S_1(M)\big|^2}{S_2(M)}.
\end{equation}
Minimizing the ratio $|S_1(M)|^2/S_2(M)$ with respect to the vector $X=(x_m)$, the optimal mollifier turns out to be
\begin{equation*}
M(\chi)=\sum_{m\leq y}\frac{\mu(m)\chi(m)}{\sqrt{m}}\bigg(1-\frac{\log m}{\log y}\bigg)
\end{equation*}
with $0<\vartheta<\tfrac{1}{2}$. The optimal proportion obtained in \eqref{25} is $\tfrac{1}{3}$, which corresponds to the choice $\vartheta=\tfrac{1}{2}$.

There are two different approaches to improve the results in this and other problems involving mollifiers. One can either extend the length of the Dirichlet polynomial, $y=q^{\vartheta}$, or use some ``better'' mollifiers. The former is certainly much more difficult. For the latter, various mollifiers have been recently proposed by combining mollifiers of different shapes. We give a brief description of these mollifiers and explanation of their failure in applying to our problem below.

Using Soundararajan's idea in [\textbf{\ref{S1}}], Michel and VanderKam [\textbf{\ref{MV}}] studied the ``twisted'' mollifier
\begin{equation*}
M(\chi)=\sum_{m\leq y_1}\frac{\mu(m)\chi(m)P[m]}{\sqrt{m}}+\overline{\varepsilon}_{\chi}\sum_{m\leq y_2}\frac{\mu(m)\overline{\chi}(m)Q[m]}{\sqrt{m}}
\end{equation*}
with $\varepsilon_\chi$ being the root factor in the functional equation $\Lambda(s,\chi)=\varepsilon_\chi\Lambda(1-s,\overline{\chi})$, where
\begin{equation*}
\Lambda(s,\chi)=\bigg(\frac{q}{\pi}\bigg)^{s/2}\Gamma\bigg(\frac{s}{2}\bigg)L(s,\chi).
\end{equation*}
However, the intricate analysis of the arising off-diagonal terms restricts the lengths to $\vartheta_1+\vartheta_2<\tfrac{1}{2}$. Under this limited condition, the optimal proportion obtained is again $\tfrac{1}{3}$. See the discussion at the end of [\textbf{\ref{MV}}].

In [\textbf{\ref{F}}], Feng introduced a mollifier for $\zeta(s)+\tfrac{\zeta'(s)}{\log T}$ and applied it to various problems on the distribution of the zeros of the Riemann zeta-function. In our case, that has the shape
\begin{equation*}
M(\chi)=\sum_{m\leq y_1}\frac{\mu(m)\chi(m)P[m]}{\sqrt{m}}+\frac{1}{L^2}\sum_{m\leq y_2}\frac{(\Lambda*\Lambda*\mu)(m)\chi(m)Q[m]}{\sqrt{m}}.
\end{equation*}
We note that this is already included in the general mollifier \eqref{26} of Iwaniec and Sarnak.

Finally we mention a mollifier type of Lou [\textbf{\ref{L}},\textbf{\ref{BCY}}]. Transforming in the context of Dirichlet $L$-functions, this is
\begin{equation*}
M(\chi)=\sum_{m\leq y_1}\frac{\mu(m)\chi(m)P[m]}{\sqrt{m}}+\varepsilon_\chi\sum_{mn\leq y_2}\frac{\mu_2(m)\chi(m)\overline{\chi}(n)Q[mn]}{\sqrt{mn}}.
\end{equation*}
Carrying out the evaluation of $S_2(M)$ in this case, the only factor involving $P(x)$ in the asymptotic formula for the cross term is $P''(x)$. As a result, when $P(x)=x$, which is optimal in Iwaniec and Sarnak [\textbf{\ref{IS}}], the cross term of $S_2(M)$ vanishes. In fact, it turns out that the optimal choice is $P(x)=x$ and $Q(x)=0$, and hence we do not gain any improvement. 

\subsection{A new mollifier}

Instead we study a two-piece mollifier of the form $\psi(\chi)=\psi_1(\chi)+\psi_2(\chi)$, where
\begin{equation*}
\psi_1(\chi)=\sum_{m\leq y_1}\frac{\mu(m)\chi(m)P[m]}{\sqrt{m}}
\end{equation*}
and
\begin{equation}\label{28}
\psi_2(\chi)=\frac{1}{L}\sum_{mn\leq y_2}\frac{(\log*\mu)(m)\mu(n)\overline{\chi}(m)\chi(n)Q[mn]}{\sqrt{mn}}.
\end{equation}
In the corresponding context of the Riemann zeta-function, this is a more natural mollifier for $\zeta'(s)$, and has been effectively used to show that more than $70.83\%$ of the nontrivial zeros of $\zeta(s)$ are simple on assuming the Riemann Hypothesis and the Generalized Lindel\"of Hypothesis [\textbf{\ref{B}}].

To see that this is also a useful choice of a mollifier for $L(\tfrac{1}{2},\chi)$, we consider the question of constructing a mollifier of different shape rather than the familiar type \eqref{26}. In view of the discussion in the previous subsection, we want our Dirichlet polynomial to contain the character $\overline{\chi}$ and not to involve the root factor $\varepsilon_\chi$. Informally we have
\begin{equation}\label{27}
\frac{1}{L(\tfrac{1}{2},\chi)}=\frac{L(\tfrac{1}{2},\overline{\chi})}{L(\tfrac{1}{2},\overline{\chi})L(\tfrac{1}{2},\chi)}=\sum_{m_1,m_2,n\geq1}\frac{\mu(m_2)\mu(n)\overline{\chi}(m_1m_2)\chi(n)}{\sqrt{m_1m_2n}}Q(m_1,m_2,n),
\end{equation}
for some certain smooth function $Q(u,v,w)$. We note that \eqref{28} is a special case of \eqref{27} by choosing
\begin{equation*}
Q(u,v,w)=L^{-1}(\log u)Q[uvw].
\end{equation*}
Since \eqref{27} also covers the familiar mollifier \eqref{26}, it is probable that studying the general mollifier \eqref{27} would lead to better improvement.

Our Theorem 1.1 is a consequence of the following two theorems.

\begin{theorem}
Suppose $\vartheta_1,\vartheta_2<1$. We have
\begin{eqnarray*}
S_1(\psi)=\varphi^{+}(q)\bigg(P(1)+\tfrac{\vartheta_2}{2}Q_1(1)\bigg)+O(qL^{-1+\varepsilon}),
\end{eqnarray*}
where
\begin{equation*}
Q_1(x)=\int_{0}^{x}Q(u)du.
\end{equation*}
\end{theorem}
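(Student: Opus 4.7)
My plan is to decompose $S_1(\psi)=S_1(\psi_1)+S_1(\psi_2)$ and treat each piece via the approximate functional equation
\[L(\tfrac{1}{2},\chi)=\sum_{\ell\geq 1}\frac{\chi(\ell)}{\sqrt{\ell}}\,V\!\left(\frac{\ell}{X}\right)+\varepsilon_{\chi}\sum_{\ell\geq 1}\frac{\overline{\chi}(\ell)}{\sqrt{\ell}}\,V\!\left(\frac{\ell}{X}\right),\qquad X=\sqrt{q/\pi},\]
with a standard smooth cutoff $V$, combined with the orthogonality relation
\[\sum_{\chi(\bmod q)}^{+}\chi(a)\overline{\chi}(b)=\varphi^{+}(q)\bigl(\mathbf{1}[a\equiv b]+\mathbf{1}[a\equiv -b]\bigr)+O(1)\]
(mod $q$, for $(ab,q)=1$) for primitive even characters. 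For $S_1(\psi_1)$ this is the Iwaniec--Sarnak computation: the orthogonality isolates the plus-piece diagonal $m\ell=1$, giving $\varphi^{+}(q)P(1)$, while the $\varepsilon_{\chi}$-piece, after opening $\varepsilon_{\chi}=\tau(\chi)/\sqrt{q}$, reduces to a Kloosterman-type sum bounded by $O(qL^{-A})$ via Weil's estimate for any $A>0$ in the range $\vartheta_1<1$.

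For $S_1(\psi_2)$, using the identity $(\log*\mu)=\Lambda$, the same procedure reduces the main contribution (the plus-piece diagonal $n\ell=m$) to
\[\frac{\varphi^{+}(q)}{L}\sum_{\substack{n,\ell\geq 1\\ n^{2}\ell\leq y_{2}}}\frac{\Lambda(n\ell)\,\mu(n)\,Q[n^{2}\ell]\,V(\ell/X)}{n\ell}.\]
I would evaluate this by Mellin inversion, representing $Q[\cdot]$ and $V(\cdot)$ as contour integrals with Mellin factors $\check{Q}(w)y_{2}^{w}(n^{2}\ell)^{-w}$ and $G(s)X^{s}\ell^{-s}$, and interchanging sum and integral to obtain a double contour integral of $\check{Q}(w)G(s)y_{2}^{w}X^{s}D(w,s)$ where
\[D(w,s)=\sum_{n,\ell}\frac{\Lambda(n\ell)\mu(n)}{n^{1+2w}\ell^{1+w+s}}=-\frac{\zeta'}{\zeta}(1+w+s)-\sum_{p}\frac{(\log p)\,p^{-(1+2w)}}{1-p^{-(1+w+s)}}.\]
The $n=1$ piece contributes a simple pole at $w+s=0$ of residue $1$, whereas the $n=p$ piece contributes a pole at $w=0$ of residue $-\tfrac{1}{2}$ (the $\tfrac{1}{2}$ arises from the substitution $u=1+2w$ in the pole of $\sum_{p}\log p\cdot p^{-u}$ at $u=1$). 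Shifting contours and collecting these residues, the partial cancellation between the two contributions produces the main term $\varphi^{+}(q)\cdot\tfrac{\vartheta_{2}}{2}Q_{1}(1)$, where the factor $\vartheta_{2}$ arises from $\log y_{2}/L$.

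The hard part will be the error analysis. The off-diagonal contributions $n\ell\equiv\pm m\pmod{q}$ with $n\ell\neq m$, together with the entire $\varepsilon_{\chi}$-piece of the AFE applied to $\psi_2$, must each be bounded by $O(qL^{-1+\varepsilon})$. For the $\varepsilon_{\chi}$-piece, after opening $\tau(\chi)$ and applying orthogonality, the expression reduces to an additive-character sum of shape
\[\frac{\varphi^{+}(q)}{L\sqrt{q}}\sum_{m,n,\ell}\frac{\Lambda(m)\mu(n)\,Q[mn]\,V(\ell/X)}{\sqrt{mn\ell}}\cdot 2\cos\!\left(\frac{2\pi m\ell\,\overline{n}}{q}\right),\]
which I would handle by Poisson summation in the smooth variable $\ell$ followed by Weil's bound on the resulting Kloosterman sums. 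The hypothesis $\vartheta_{1},\vartheta_{2}<1$ is precisely what keeps this total error below $qL^{-1+\varepsilon}$, and it also ensures that the ``secondary diagonals'' ($n\ell=m+kq$ with $k\geq 1$) arising when $y_{2}>\sqrt{q}$ can be absorbed into the error, so that the main term remains uniformly $\tfrac{\vartheta_{2}}{2}Q_{1}(1)$ across the whole range.
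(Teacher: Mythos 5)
Your overall strategy matches the paper's: split $S_1(\psi)=S_1(\psi_1)+S_1(\psi_2)$, cite Iwaniec--Sarnak for the $\psi_1$ piece, and compute the $\psi_2$ piece via orthogonality, Mellin inversion and a residue analysis. Your Dirichlet series $D(w,s)$ is the correct diagonal generating function, with poles at $w+s=0$ (residue $1$) and at $w=0$ (residue $-\tfrac{1}{2}$); at $\vartheta_2<\tfrac12$ your residue book-keeping does reproduce $\tfrac{\vartheta_2}{2}Q_1(1)$. But there is a genuine gap in the way you represent $L(\tfrac12,\chi)$, and it will break the argument for $\vartheta_2>\tfrac12$ --- which the theorem must cover, since the deduction of the main result takes $\vartheta_2=\tfrac12$.

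The paper does not use the balanced approximate functional equation for $I(\alpha)$ at all. Its Lemma 3.2 produces a one-piece representation
\[
L(\tfrac12+\alpha,\chi)=\sum_{m\geq1}\frac{\chi(m)}{m^{1/2+\alpha}}V\!\left(\frac{m}{q^{1+\varepsilon}}\right)+O(q^{-B}),
\]
obtained by shifting the Mellin contour and using only the convexity bound $L(\sigma+it,\chi)\ll(q(1+|t|))^{1/2-\sigma}$. There is no root-number piece, so no Kloosterman sums ever appear, and the $L$-series variable runs up to $q^{1+\varepsilon}$. This is exactly what allows $\vartheta_2<1$: the diagonal $\ell k=h$ with $\ell k^2\leq y_2$ is captured in full, and the off-diagonal contribution in Lemma 3.3 is $O((y_2q)^{1/2+\varepsilon})=o(q)$ precisely in that range.

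Your balanced AFE with $X=\sqrt{q/\pi}$ truncates the $\ell$-variable at $\ell\lesssim\sqrt{q}$. In your own Mellin picture this shows up as a conflict of contour shifts: to make the remainder integral small you need $X^{\mathrm{Re}(s)}y_2^{\mathrm{Re}(w)}$ small, which with $X=\sqrt{q}$ forces $|\mathrm{Re}(s)|>2\vartheta_2\,\mathrm{Re}(w)$; but staying to the right of the pole of $D$ at $s=-w$ requires $|\mathrm{Re}(s)|<\mathrm{Re}(w)$. These are compatible only when $\vartheta_2<\tfrac12$. For $\vartheta_2>\tfrac12$ you cross $s=-w$, and the resulting residue $-\mathrm{Res}_{w=0}\big[G(-w)(y_2/X)^{w}w^{-j-2}\big]\asymp(\log(y_2/X))^{j+1}/(j+1)!$ is of main-term size. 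Consequently the first-piece diagonal alone does not equal $\tfrac{\vartheta_2}{2}Q_1(1)\varphi^{+}(q)$, and the $\varepsilon_\chi$-piece must supply the compensating main term --- which directly contradicts your plan to bound the entire $\varepsilon_\chi$-piece by $O(qL^{-1+\varepsilon})$ using Poisson and Weil. To repair your argument you would either have to extract and match this extra main term from the Kloosterman/Gauss-sum analysis (a substantial extra computation), or, more simply, adopt the paper's unbalanced representation, under which the $s$-shift to $\mathrm{Re}(s)=-(1-\varepsilon/2)\delta$ already makes the remainder negligible while staying short of the pole at $s=-u-\alpha$.
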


\begin{theorem}
Suppose $\vartheta_2<\vartheta_1<\tfrac{1}{2}$. We have
\begin{equation*}
S_2(\psi)=\lambda\varphi^{+}(q)+O(qL^{-1+\varepsilon}),
\end{equation*}
where
\begin{eqnarray*}
\lambda&=&P(1)^2+\tfrac{1}{\vartheta_{1}}\int_{0}^{1}P'(x)^2dx-\vartheta_2 P(1)Q_1(1)+2\vartheta_2\int_{0}^{1}P\big(1-\tfrac{\vartheta_2(1-x)}{\vartheta_1}\big)Q(x)dx\\
&&\quad+\tfrac{\vartheta_2}{\vartheta_1}\int_{0}^{1}P'\big(1-\tfrac{\vartheta_2(1-x)}{\vartheta_1}\big)Q(x)dx+\vartheta_{2}^{2}\int_{0}^{1}(1-x)Q(x)^2dx+\tfrac{\vartheta_2}{2}\int_{0}^{1}(1-x)^2Q'(x)^2dx\\
&&\quad\quad-\tfrac{\vartheta_{2}^{2}}{4}Q_1(1)^2+\tfrac{\vartheta_2}{4}\int_{0}^{1}Q(x)^2dx.
\end{eqnarray*}
\end{theorem}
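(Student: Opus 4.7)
The plan is to decompose $|\psi(\chi)|^2 = |\psi_1(\chi)|^2 + 2\Re\bigl(\psi_1(\chi)\overline{\psi_2(\chi)}\bigr) + |\psi_2(\chi)|^2$ and treat each of the three resulting pieces separately, so that $\lambda$ emerges as the sum of their main terms. For each piece I would insert an approximate functional equation for $|L(\tfrac12,\chi)|^2$, writing it as a symmetric double Dirichlet polynomial $2\sum_{h,k}\chi(h)\overline{\chi}(k)(hk)^{-1/2}V(hk/q)$ with smooth cutoff $V$, multiply by the mollifier factors, and then average over $\chi\in\mathscr{C}_q^+$ using orthogonality of even primitive characters. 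Orthogonality reduces each average to an arithmetic sum restricted to the diagonal condition $(\cdot)\equiv\pm(\cdot)\pmod q$; since $\vartheta_2<\vartheta_1<\tfrac12$, the $\equiv -$ case and all fully off-diagonal contributions can be absorbed into the error $O(qL^{-1+\varepsilon})$ via Weil-type bounds for Kloosterman sums, exactly as in \textbf{[IS]}.

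The $|\psi_1|^2$ contribution is the Iwaniec--Sarnak calculation and yields $P(1)^2 + \vartheta_1^{-1}\int_0^1 P'(x)^2\,dx$. For the cross term $2\Re S_2(\psi_1,\psi_2)$, the key new ingredient is the Dirichlet series of $\log*\mu$, namely $-\zeta'(s)/\zeta(s)$, which carries a simple pole at $s=1$. Invoking Mellin inversion for $P[m]$ and $Q[mn]$ and shifting contours past the poles at $s=0$ and $s=1$, the residue calculation---after the substitution $x=1-\log(\cdot)/\log y_2$---should produce exactly the three mixed terms $-\vartheta_2 P(1)Q_1(1) + 2\vartheta_2\int_0^1 P\bigl(1-\tfrac{\vartheta_2(1-x)}{\vartheta_1}\bigr)Q(x)\,dx + \tfrac{\vartheta_2}{\vartheta_1}\int_0^1 P'\bigl(1-\tfrac{\vartheta_2(1-x)}{\vartheta_1}\bigr)Q(x)\,dx$. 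The linear argument $1-\vartheta_2(1-x)/\vartheta_1$ inside $P$ and $P'$ reflects the mismatch between the lengths $y_1=q^{\vartheta_1}$ and $y_2=q^{\vartheta_2}$, while the appearance of $P'$ traces back to differentiating the Mellin kernel at a double pole.

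The evaluation of $S_2(\psi_2)$ is the main obstacle. The four-fold sum combined with two $(\log*\mu)$ factors produces a Dirichlet series whose poles are of higher order than in the previous two pieces, so the residues in the auxiliary Mellin variables become second-order derivatives, naturally yielding integrals of $Q^2$, $(1-x)^2(Q')^2$, and the constant $Q_1(1)^2$. Carrying out this calculation rigorously---tracking the contributions of the diagonal versus subdiagonal pairings of the four inner indices, and converting each multiple residue into the correctly normalized real-variable integral---is the delicate part, and should produce the remaining four summands of $\lambda$: $\vartheta_2^2\int_0^1(1-x)Q(x)^2\,dx$, $\tfrac{\vartheta_2}{2}\int_0^1(1-x)^2Q'(x)^2\,dx$, $-\tfrac{\vartheta_2^2}{4}Q_1(1)^2$, and $\tfrac{\vartheta_2}{4}\int_0^1 Q(x)^2\,dx$. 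The $P$-independence of this block is a useful consistency check. Throughout, the hypothesis $\vartheta_2<\vartheta_1<\tfrac12$ keeps the combined Dirichlet polynomial lengths below $q$, which is what allows all off-diagonal remainders to fit inside $qL^{-1+\varepsilon}$.
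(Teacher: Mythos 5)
Your decomposition into $|\psi_1|^2$, the cross term, and $|\psi_2|^2$ matches the paper's (it packages the latter two as $J_1$ and $J_2$), and the general pipeline---approximate functional equation for $L(\tfrac12+\alpha,\chi)L(\tfrac12+\beta,\overline\chi)$, orthogonality over $\mathscr{C}_q^+$, extraction of the diagonal, then Mellin and contour analysis---is the same. There is one misstatement and one substantial omission. The off-diagonal remainder is \emph{not} handled by Kloosterman or Weil bounds here: in the ranges $\vartheta_1+\vartheta_2<1$ and $2\vartheta_2<1$, Lemma 3.5 disposes of the non-diagonal terms with an elementary GCD estimate, $E_2(h,k)=\sum_{mk\ne nh}(mk\pm nh,q)(mn)^{-1/2}\big|W_{\alpha,\beta}^{\pm}(\tfrac{\pi mn}{q})\big|$, using only the decay $W^{\pm}(x)\ll(1+|x|)^{-1}$ to obtain $\sum_{hk\le y}E_2(h,k)/\sqrt{hk}\ll(yq)^{1/2+\varepsilon}$; Kloosterman-sum technology would be needed only if one wanted to push $\vartheta_1$ past $\tfrac12$, which this paper does not do. More importantly, the diagonal evaluation---where essentially all the work is---rests on two organizational devices your outline does not mention: the paper introduces shift parameters $\alpha,\beta\ll L^{-1}$ so that every pole crossed is simple and the final answer is packaged as a double derivative $\tfrac{d^2}{da\,db}\{\cdots\}\big|_{a=b=0}$ (Lemmas 2.2 and 2.3), and it represents the $\log*\mu$ factor as $-\tfrac{\partial}{\partial\gamma}\big|_{\gamma=0}$ of a $\gamma$-shifted $\mu$-sum, which converts the inner arithmetic sum into an explicit ratio of shifted zeta values as in \eqref{18}. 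Carrying out the $J_2$ residue bookkeeping (which you rightly flag as the hard part, with its higher-order poles and the $P$-independence check) genuinely requires this scaffolding together with the $d_k$-sum asymptotic of Lemma 3.6 and the truncated-Perron argument of Lemmas 5.1 and 6.2; your proposal leaves all of that to the reader. So: correct plan and correct target identities, wrong tool named for the error term, and the mechanism that actually produces the nine summands of $\lambda$ is assumed rather than supplied.
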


\noindent\textit{Deduction of Theorem 1.1}. We take $\vartheta_1=\vartheta_2=\tfrac{1}{2}$, $Q(x)=0.9x$ and
\begin{displaymath}
P(x)=1.05x-0.05x^2.
\end{displaymath}
These above theorems and \eqref{25} lead to Theorem 1.1.

\subsection{Setting up}

We collect the results in [\textbf{\ref{IS}}] to obtain
\begin{eqnarray*}
\frac{1}{\varphi^{+}(q)}{\sum_{\chi(\textrm{mod }q)}\!\!\!\!}^+ \ L(\tfrac{1}{2},\chi)\psi_1(\chi)=P(1)+O(L^{-1+\varepsilon})
\end{eqnarray*}
for $0<\vartheta_1<1$, and
\begin{eqnarray*}
\frac{1}{\varphi^{+}(q)}{\sum_{\chi(\textrm{mod }q)}\!\!\!\!}^+ \ \big|L(\tfrac{1}{2},\chi)\psi_1(\chi)\big|^2=P(1)^2+\tfrac{1}{\vartheta_1}\int_{0}^{1}P'(x)^2dx+O(L^{-1+\varepsilon})
\end{eqnarray*}
for $0<\vartheta_1<\tfrac{1}{2}$. Hence we are left to consider
\begin{eqnarray*}
I={\sum_{\chi(\text{mod }q)}\!\!\!\!}^+ \ L(\tfrac{1}{2},\chi)\psi_2(\chi),\quad J_1={\sum_{\chi(\text{mod }q)}\!\!\!\!}^+ \ \big|L(\tfrac{1}{2},\chi)|^2\psi_1(\chi)\psi_2(\overline{\chi})
\end{eqnarray*}
and
\begin{eqnarray*}
J_2={\sum_{\chi(\text{mod }q)}\!\!\!\!}^+ \ \big|L(\tfrac{1}{2},\chi)\psi_2(\chi)\big|^2.
\end{eqnarray*}

\subsection{Introducing the shifts}

It is no more difficult to work with shifted moments. Instead we will consider the following more general sums

\begin{eqnarray}
&&I(\alpha)={\sum_{\chi(\text{mod }q)}\!\!\!\!}^+ \ L(\tfrac{1}{2}+\alpha,\chi)\psi_2(\chi),\label{5}\\
&&J_1(\alpha,\beta)={\sum_{\chi(\text{mod }q)}\!\!\!\!}^+ \ L(\tfrac{1}{2}+\alpha,\chi)L(\tfrac{1}{2}+\beta,\overline{\chi})\psi_1(\chi)\psi_2(\overline{\chi})\label{6}
\end{eqnarray}
and
\begin{equation}\label{7}
J_2(\alpha,\beta)={\sum_{\chi(\text{mod }q)}\!\!\!\!}^+ \ L(\tfrac{1}{2}+\alpha,\chi)L(\tfrac{1}{2}+\beta,\overline{\chi})\big|\psi_2(\chi)\big|^2.
\end{equation}

Our main goal in the rest of the paper is to prove the following lemmas.

\begin{lemma}
Suppose $\vartheta_2<1$. Uniformly for $\alpha\ll L^{-1}$ we have
\begin{eqnarray*}
I(\alpha)=\varphi^{+}(q)\bigg(\vartheta_2\int_{0}^{1}y^{-\alpha(1-x)}Q(x)dx-\tfrac{\vartheta_2}{2}Q_1(1)\bigg)+O(qL^{-1+\varepsilon}).
\end{eqnarray*}
\end{lemma}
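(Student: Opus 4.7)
The plan is to apply an approximate functional equation (AFE) to $L(\tfrac{1}{2}+\alpha,\chi)$, expand $\psi_2(\chi)$ according to its definition, swap the order of summation, and invoke orthogonality of primitive even characters mod $q$. A key preliminary observation is that $(\log*\mu)(m)=\Lambda(m)$, so the weight in $\psi_2$ is $\Lambda(m)\mu(n)$. The AFE takes the form
\begin{equation*}
L(\tfrac{1}{2}+\alpha,\chi)=\sum_{k}\frac{\chi(k)}{k^{1/2+\alpha}}V_{\alpha}\bigl(\tfrac{k}{\sqrt{q}}\bigr)+\varepsilon_{\chi}X_{\alpha}\sum_{k}\frac{\overline{\chi}(k)}{k^{1/2-\alpha}}V_{-\alpha}\bigl(\tfrac{k}{\sqrt{q}}\bigr),
\end{equation*}
where $V_{\pm\alpha}$ are smooth cutoffs decaying rapidly past $\sqrt{q}$ and $X_{\alpha}$ is the associated gamma-factor ratio. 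Multiplying by $\psi_{2}(\chi)$ produces a direct piece with character sum $\sum^{+}_{\chi}\chi(kn)\overline{\chi}(m)$ and a dual piece with $\sum^{+}_{\chi}\varepsilon_{\chi}\overline{\chi}(km)\chi(n)$.

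For the direct piece, orthogonality of primitive even characters (in the standard $\mu*\varphi$ form, including both the $a\equiv b$ and $a\equiv-b$ contributions) picks out the diagonal $kn=m$; the off-diagonal contributions $kn\equiv\pm m\pmod{q}$ with $kn\ne m$ require $|kn\pm m|\ge q$ and are $O(qL^{-A})$ by the AFE decay and the hypothesis $\vartheta_{2}<1$ (which keeps $y_{2}<q$). On the diagonal $kn=m$, the weight $\Lambda(m)\mu(n)$ forces $m=p^{j}$ with either $(n,k)=(1,p^{j})$ or $(n,k)=(p,p^{j-1})$, splitting the diagonal into subsums
\begin{equation*}
\Sigma_{A}=\frac{\varphi^{+}(q)}{L}\sum_{k\le y_{2}}\frac{\Lambda(k)}{k^{1+\alpha}}Q[k],\qquad\Sigma_{B}=-\frac{\varphi^{+}(q)}{L}\sum_{\substack{p,\,j\ge 1\\ p^{j+1}\le y_{2}}}\frac{\log p}{p^{(j-1)(1+\alpha)+1}}Q[p^{j+1}].
\end{equation*}

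To evaluate $\Sigma_{A}$, I substitute $k=y_{2}^{1-x}$ and apply the prime number theorem $\sum_{k\le T}\Lambda(k)=T+O(T\exp(-c\sqrt{\log T}))$ via Riemann--Stieltjes integration, obtaining $\Sigma_{A}=\varphi^{+}(q)\cdot\vartheta_{2}\int_{0}^{1}y_{2}^{-\alpha(1-x)}Q(x)\,dx+O(qL^{-1+\varepsilon})$. For $\Sigma_{B}$, the dominant contribution comes from $j=1$: substituting $p=y_{2}^{x/2}$ and using Mertens' theorem $\sum_{p\le T}(\log p)/p=\log T+O(1)$ gives $\Sigma_{B}=-\varphi^{+}(q)\cdot\tfrac{\vartheta_{2}}{2}Q_{1}(1)+O(L^{-1+\varepsilon})$, while the $j\ge 2$ contributions are $O(1)$.

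The dual piece is handled by expanding $\varepsilon_{\chi}\overline{\chi}(a)=q^{-1/2}\sum_{b(q)}\chi(b)e(ab/q)$ and re-applying orthogonality in $\chi$; this reduces the piece to an oscillatory sum over frequencies of the shape $e(\pm km/(qn))$ weighted by the smooth cutoff $V_{-\alpha}(k/\sqrt{q})$. Poisson summation in $k$ (or a direct Weil-type bound on the resulting incomplete Kloosterman-like sums) then yields $O(qL^{-A})$ for any $A>0$, provided $\vartheta_{2}<1$ so that the relevant moduli stay out of resonance. Combining $\Sigma_{A}$, $\Sigma_{B}$, and the dual bound gives the claimed asymptotic. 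The main obstacle is the careful treatment of the dual piece as $\vartheta_{2}$ approaches $1$, together with verifying that the primitivity corrections in the $\mu*\varphi$ orthogonality formula contribute only to the error; the combinatorial bookkeeping for $\Sigma_{B}$ — tracking the precise constant $\tfrac{\vartheta_{2}}{2}$ from the interplay of $\Lambda$ and Mertens — is also delicate, though essentially mechanical.
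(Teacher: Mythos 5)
Your combinatorial reduction of the diagonal is a genuinely different route from the paper, and the insight that $(\log*\mu)=\Lambda$ forces the diagonal $kn=m$ to split into the two prime-power configurations $(n,k)=(1,p^j)$ and $(p,p^{j-1})$ is correct; applying PNT by partial summation for $\Sigma_A$ and Mertens for $\Sigma_B$ does reproduce the constants $\vartheta_2\int_0^1 y_2^{-\alpha(1-x)}Q(x)\,dx$ and $-\tfrac{\vartheta_2}{2}Q_1(1)$. The paper instead expresses the diagonal sum through Mellin transforms and a generating Dirichlet series (Lemma 3.3 together with equations \eqref{8}, \eqref{9}, and \eqref{13}), then evaluates by contour shifts and residues (Lemma 4.1). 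Both reach the same main term.

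However, there is a genuine gap in your treatment of the error terms, and it stems from the choice of approximate functional equation. You use the balanced AFE of length $\sqrt{q}$ with a dual piece carrying $\varepsilon_\chi$. The paper, by contrast, uses Lemma 3.2, which is a \emph{one-sided} approximate functional equation with $X=q^{1+\varepsilon}$ and no root-number twist; the cutoff $V(m/q^{1+\varepsilon})$ is essentially $1$ for all $m\le y_2<q$, and the only error is $E_1(h,k)$ from Lemma 3.3, controlled cleanly by $\sum_{hk\le y_2}E_1(h,k)/\sqrt{hk}\ll(y_2q)^{1/2+\varepsilon}$, which is $o(q)$ precisely when $\vartheta_2<1$. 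Your assertion that the dual piece is $O(qL^{-A})$ cannot be correct for $\vartheta_2>\tfrac{1}{2}$: in that range your direct piece's cutoff $V_\alpha(k/\sqrt q)$ truncates the diagonal at $m=kn\lesssim\sqrt q$ (in particular, the $(n,k)=(1,p^j)$ terms with $\sqrt q<p^j\le y_2$ are lost from $\Sigma_A$), so the dual piece \emph{must} supply the missing part of the main term rather than vanish. Similarly, your direct off-diagonal claim that $kn\equiv\pm m\pmod q$, $kn\ne m$ forces $|kn\pm m|\ge q$ to lie in the tail of $V_\alpha$ fails once $kn$ can reach $\sqrt q\cdot y_2=q^{1/2+\vartheta_2}>q$ with $k\asymp\sqrt q$ in the bulk of the cutoff. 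The Poisson/Kloosterman step is asserted but not carried out, and it is exactly the kind of off-diagonal analysis whose difficulty forced Michel--VanderKam to restrict to $\vartheta_1+\vartheta_2<\tfrac12$ for their twisted mollifier. Since the deduction of Theorem 1.1 takes $\vartheta_2=\tfrac12$, this gap is fatal. The fix is to replace the balanced AFE with the paper's one-sided version (Lemma 3.2) — after which your combinatorial evaluation of the diagonal goes through unchanged and gives a somewhat more elementary proof of the main term than the paper's contour-integral method.
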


\begin{lemma}
Suppose $\vartheta_2<\vartheta_1<\tfrac{1}{2}$. Uniformly for $\alpha,\beta\ll L^{-1}$ we have
\begin{eqnarray}\label{19}
&&\!\!\!\!\!\!\!\!\!\!J_1(\alpha,\beta)=\varphi^{+}(q)\frac{d^2}{dadb}\bigg\{\tfrac{\vartheta_2}{\vartheta_1}\int_{0}^{1}\int_{0}^{1}\int_{0}^{x}y_{1}^{\beta a}y_{2}^{\alpha b-\beta u}(qy_{1}^{a}y_{2}^{b-u})^{-(\alpha+\beta)t}\big(1+\vartheta_1a+\vartheta_2(b-u)\big)\\
&&\!\!\!\!\!\!\!\qquad P\big(1-\tfrac{\vartheta_2(1-x)}{\vartheta_1}+a\big)Q(x-u+b)dudxdt-\tfrac{\vartheta_2}{2\vartheta_1}\int_{0}^{1}\int_{0}^{1}y_{1}^{\beta a}y_{2}^{\alpha b}(qy_{1}^{a}y_{2}^{b})^{-(\alpha+\beta)t}\big(1+\vartheta_1a+\vartheta_2b\big)\nonumber\\
&&\!\!\!\!\!\!\!\!\!\!\qquad\qquad\qquad P\big(1-\tfrac{\vartheta_2(1-x)}{\vartheta_1}+a\big)Q_1(x+b)dxdt\bigg\}\bigg|_{a=b=0}+O(qL^{-1+\varepsilon}).\nonumber
\end{eqnarray}
\end{lemma}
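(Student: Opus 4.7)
The plan is to follow the standard strategy for mollified second moments: apply an approximate functional equation, open the two mollifiers, use character orthogonality, and extract the main term by contour integration.

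First, since $\varepsilon_\chi\varepsilon_{\overline{\chi}}=1$, the product $L(\tfrac{1}{2}+\alpha,\chi)L(\tfrac{1}{2}+\beta,\overline{\chi})$ admits a symmetric approximate functional equation consisting of a ``direct'' sum $\sum_{h,k}\chi(h)\overline{\chi}(k)h^{-1/2-\alpha}k^{-1/2-\beta}V_{\alpha,\beta}(hk/q)$ and a ``dual'' sum obtained by $(\alpha,\beta)\mapsto(-\beta,-\alpha)$ multiplied by a $\chi$-independent gamma ratio $X_{\alpha,\beta}=(q/\pi)^{-\alpha-\beta}\cdot(\textrm{gamma factors})$. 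Substituting this together with the definitions of $\psi_1(\chi)$ and $\psi_2(\overline{\chi})$, and labelling the $\psi_1$-variable by $a$ and the two $\psi_2$-variables by $b,c$ (carrying $\log\ast\mu=\Lambda$ and $\mu$ respectively), we get a six-fold sum whose character dependence reduces to $\chi(hab)\overline{\chi}(kc)$.

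Orthogonality over $\mathscr{C}_q^+$ then collapses the sum to the congruence $hab\equiv\pm kc\pmod q$. The minus sign forces $hab+kc\geq q$, which is outside the effective support of the smooth weights and is harmless. The plus sign splits into the diagonal $hab=kc$ and an off-diagonal family $hab-kc=qj\ne0$. Under the hypothesis $\vartheta_2<\vartheta_1<\tfrac{1}{2}$ we have $y_1 y_2\leq q^{1-\delta}$ for some $\delta>0$, so an Iwaniec--Sarnak style estimate for the off-diagonal using standard divisor bounds places its contribution comfortably inside $O(qL^{-1+\varepsilon})$.

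The diagonal $hab=kc$ is turned into a multiple Mellin--Barnes integral by replacing each of $P[\cdot]$, $Q[\cdot]$ and $V_{\alpha,\beta}(hk/q)$ by its integral representation. The resulting Dirichlet series over $h,k,a,b,c$ factorises as an Euler product expressible through $\zeta$, $1/\zeta$ and $-\zeta'/\zeta$ factors; shifting contours past the principal poles at $s=1$ (and their combinations with the shifts), collecting residues, and merging the direct and dual AFE contributions yields the claimed integral. The $t$-integration over $[0,1]$ in \eqref{19} is the divided-difference merger of the two AFE pieces in the combined shift $\alpha+\beta$; the double derivative $\frac{d^2}{da\,db}\big|_{a=b=0}$ is the device for recovering the polynomials $P$ and $Q$ from their Mellin data; the inner integral $\int_0^x\cdots\,du$ encodes the Dirichlet convolution $\log\ast\mu$ inside $\psi_2$; and the two summands inside the braces correspond to the leading and subleading contributions of the Laurent expansion at the coalescing poles.

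The main obstacle will be this last residue bookkeeping. The factor $\Lambda=\log\ast\mu$ contributes $-\zeta'/\zeta$, which together with the $1/\zeta$ factors from the two Möbius functions in $\psi_1,\psi_2$ and the $\zeta$-pole supplied by the diagonal condition $hab=kc$ produces poles of higher order whose Laurent expansions must be tracked very carefully in order to reproduce the precise coefficient structure of \eqref{19}. Once that algebraic step is in hand, reparametrising the contour integrals in the real variables $x,u,t$ and the shifts $a,b$ is a mechanical, if lengthy, simplification.
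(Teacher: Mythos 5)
Your outline reproduces the paper's strategy faithfully: approximate functional equation (Lemma 3.4), orthogonality packaged with the diagonal/off-diagonal split in Lemma 3.5 (the off-diagonal absorbing into $O((y_1y_2q)^{1/2+\varepsilon})$ via $\vartheta_1+\vartheta_2<1$), Mellin representations of $P[\cdot]$, $Q[\cdot]$, $W^{\pm}_{\alpha,\beta}$, factorization of the diagonal Dirichlet series into $\zeta$-factors with the $\log*\mu$ piece handled by a $\gamma$-derivative, and the $t$-integral implementing the merger of the direct and dual AFE contributions through $\frac{1-z^{-\alpha-\beta}}{\alpha+\beta}=(\log z)\int_0^1 z^{-(\alpha+\beta)t}\,dt$.

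The gap is in the step you characterize as ``mechanical, if lengthy'' and defer. After the zeta-factorization, the two Mellin variables $u$ (attached to $\psi_1$, length $y_1$) and $v$ (attached to $\psi_2$, length $y_2$) are coupled through the pole of $\zeta(1+u+v)$ at $u=-v$, which prevents an independent contour shift into the critical strip. The paper breaks this coupling by expanding $\zeta(1+u+v)=\sum_{n\le y_2}n^{-1-u-v}+\cdots$ (restricting to $n\le y_2$), separating the double integral into a product of single integrals over $u$ and $v$ for each $n$ (equation \eqref{16}), and then evaluating each by a prime-number-theorem contour shift. It is precisely here that the hypothesis $\vartheta_2<\vartheta_1$ — not merely $\vartheta_1+\vartheta_2<1$ — becomes indispensable: since $n\le y_2$ one has $\log(y_1/n)\ge(\vartheta_1-\vartheta_2)L\gg L$, which keeps the $u$-integral concentrated at its residue at $u=0$ with controllable error. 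Your proposal uses the hypotheses only to control the off-diagonal and does not anticipate this separation-of-variables device nor the asymmetric role of $\vartheta_2<\vartheta_1$; without that, the residue bookkeeping you describe cannot actually be carried out.
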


\begin{lemma}
Suppose $\vartheta_2<\tfrac{1}{2}$. Uniformly for $\alpha,\beta\ll L^{-1}$ we have
\begin{eqnarray*}
J_{2}(\alpha,\beta)&=&\varphi^{+}(q)\frac{d^2}{dadb}\bigg\{\tfrac{\vartheta_2}{2}\int_{0}^{1}y_{2}^{\alpha b+\beta a}(qy_{2}^{a+b})^{-(\alpha+\beta)t}\big(1+\vartheta_2(a+b)\big)(1-x)^2Q(x+a)Q(x+b)dx\\
&&\!\!\!\!\!\!\!\!\!\!\!\!\!\!\!\!\!\!\!\!\!\!\!\!\!+\vartheta_2\int_{0}^{1}\int_{0}^{x}\int_{0}^{x}y_{2}^{\alpha b+\beta a-\alpha u-\beta v}(qy_{2}^{a+b-u-v})^{-(\alpha+\beta)t}\\
&&\qquad\qquad\qquad\big(1+\vartheta_2(a+b-u-v)\big)Q(x-u+a)Q(x-v+b)dudvdx\\
&&\!\!\!\!\!\!\!\!\!\!\!\!\!\!\!\!\!\!\!\!\!\!\!\!\!-\tfrac{\vartheta_2}{2}\int_{0}^{1}\int_{0}^{x}y_{2}^{\alpha b+\beta a-\alpha u}(qy_{2}^{a+b-u})^{-(\alpha+\beta)t}\big(1+\vartheta_2(a+b-u)\big)Q(x-u+a)Q_1(x+b)dudx\\
&&\!\!\!\!\!\!\!\!\!\!\!\!\!\!\!\!\!\!\!\!\!\!\!\!\!-\tfrac{\vartheta_2}{2}\int_{0}^{1}\int_{0}^{x}y_{2}^{\alpha a+\beta b-\beta u}(qy_{2}^{a+b-u})^{-(\alpha+\beta)t}\big(1+\vartheta_2(a+b-u)\big)Q(x-u+a)Q_1(x+b)dvdx\\
&&\!\!\!\!\!\!\!\!\!\!\!\!\!\!\!\!\!\!\!\!\!\!\!\!\!+\tfrac{\vartheta_2}{4}\int_{0}^{1}y_{2}^{\alpha b+\beta a}(qy_{2}^{a+b})^{-(\alpha+\beta)t}\big(1+\vartheta_2(a+b)\big)Q_1(x+a)Q_1(x+b)dx\bigg\}\bigg|_{a=b=0}+O(qL^{-1+\varepsilon}).
\end{eqnarray*}
\end{lemma}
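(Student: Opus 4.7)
The plan is to write out the square
\begin{equation*}
|\psi_2(\chi)|^2=\frac{1}{L^2}\!\!\!\sum_{\substack{m_1n_1\leq y_2\\ m_2n_2\leq y_2}}\!\!\!\frac{\Lambda(m_1)\mu(n_1)\Lambda(m_2)\mu(n_2)}{\sqrt{m_1n_1m_2n_2}}Q[m_1n_1]Q[m_2n_2]\,\chi(m_2n_1)\overline{\chi}(m_1n_2)
\end{equation*}
(using $\log\ast\mu=\Lambda$) and to open the product $L(\tfrac{1}{2}+\alpha,\chi)L(\tfrac{1}{2}+\beta,\overline{\chi})$ by the shifted approximate functional equation, which expresses it as a principal sum $\sum_{h,k}\chi(h)\overline{\chi}(k)h^{-1/2-\alpha}k^{-1/2-\beta}V_{\alpha,\beta}(hk/q)$ plus a dual sum obtained from $\Lambda(s,\chi)=\varepsilon_{\chi}\Lambda(1-s,\overline{\chi})$. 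Since $\varepsilon_{\chi}\overline{\varepsilon}_{\chi}=1$ the root number disappears in the dual piece, leaving an archimedean factor $(q/\pi)^{-(\alpha+\beta)/2}$ times a gamma-ratio. Representing $V_{\alpha,\beta}$ by a Mellin contour integral produces the $(qy_{2}^{\ast})^{-(\alpha+\beta)t}$ weight seen in the statement (with $t$ the contour parameter after a rescaling), while the polynomials $Q[m_jn_j]$ are encoded by auxiliary factors $(y_2/m_jn_j)^{a}$ and $(y_2/m_jn_j)^{b}$ to be differentiated at $a=b=0$, explaining the outer $d^{2}/da\,db$.

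I then apply the orthogonality relation for even primitive characters modulo $q$; because $\chi$ is even, the two diagonals $a\equiv\pm b\pmod q$ contribute identically, so it suffices to analyse the congruence $hm_2n_1\equiv km_1n_2\pmod q$ and bound the rest. Under the hypothesis $\vartheta_2<\tfrac{1}{2}$ the product of the six variables in each AFE piece has length $\ll y_2^{2}q^{1/2+\varepsilon}=o(q)$; only the genuine equality $hm_2n_1=km_1n_2$ therefore survives in each piece, and the residual congruence terms are $O(qL^{-1+\varepsilon})$ by Cauchy--Schwarz together with classical divisor bounds. Combining both AFE pieces on the diagonal gives a sum weighted by $1$ from the principal part and by $(qy_{2}^{\ast})^{-(\alpha+\beta)}$ from the dual part, which the $t$-integration interpolates between.

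The remaining inner sum is a multiple Dirichlet series over $(h,k,m_1,m_2,n_1,n_2)$ subject to $hm_2n_1=km_1n_2$. Parametrising by common divisors and factoring as an Euler product, the local factor at each prime is a rational combination of $\zeta$, $-\zeta'/\zeta$ and $1/\zeta$ evaluated at linear shifts of $\alpha,\beta,a,b$ and the Mellin variable $s$ of $V_{\alpha,\beta}$. Shifting $s$ across the origin and collecting residues produces the five terms in the statement: (i) the ``square'' case $h=k$, $m_2n_1=m_1n_2$, where both $\Lambda$-factors survive, yielding the $(1-x)^{2}Q(x+a)Q(x+b)$ piece; (ii) the generic splitting giving the triple integral over $u,v,x$; (iii)--(iv) the two mixed configurations in which one $n_j$ is absorbed against the opposite $\Lambda$, pairing $Q$ with $Q_1$; and (v) the doubly-collapsed cross term $Q_1(x+a)Q_1(x+b)$ with prefactor $\tfrac{\vartheta_2}{4}$. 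The main obstacle is this residue bookkeeping: the $\Lambda$ and $\mu$ weights have to be disentangled prime by prime via $\sum_{d\mid n}\mu(d)=[n=1]$ and $\sum_{d\mid n}\Lambda(d)=\log n$, and each of the five pole configurations matched unambiguously with its integral in the statement. The off-diagonal bound, though delicate (it follows the Iwaniec--Sarnak treatment of the second moment $\sum^+_\chi|L(\tfrac{1}{2},\chi)|^2$ via Poisson summation and Weil's bound for Kloosterman-type sums), is the reason for the tight hypothesis $\vartheta_2<\tfrac{1}{2}$.
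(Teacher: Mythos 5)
Your overall strategy coincides with the paper's: open the product of $L$-values via the shifted approximate functional equation, apply orthogonality of even primitive characters to isolate the diagonal $hm_2n_1=km_1n_2$, factor the resulting multiple Dirichlet series as a quotient of zeta functions times an absolutely convergent Euler product, and extract the main term by residue calculus before recombining the two halves of the functional equation. The paper does carry this out, so you have the right skeleton. However, two of the key technical claims in your sketch are wrong or unjustified, and they matter.

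First, the off-diagonal treatment. You assert that the bound is ``delicate'' and proceeds via Poisson summation and Weil's estimate for Kloosterman sums, and that this is the source of the constraint $\vartheta_2<\tfrac{1}{2}$. That is not what happens. The paper's Lemma 3.5 handles the non-diagonal terms entirely elementarily: after orthogonality one bounds $E_2(h,k)=\sum_{mk\ne nh}(mk\pm nh,q)(mn)^{-1/2}|W^{\pm}_{\alpha,\beta}(\pi mn/q)|$ using only the rapid decay of $W^{\pm}_{\alpha,\beta}$, obtaining $\sum_{hk\le y}E_2(h,k)/\sqrt{hk}\ll(yq)^{1/2+\varepsilon}$. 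With $y=y_2^2$ this is $q^{\vartheta_2+1/2+O(\varepsilon)}$, and the hypothesis $\vartheta_2<\tfrac12$ is exactly what makes this $o(q)$; no Kloosterman-sum input is needed. Invoking Poisson and Weil here is not merely overkill, it misidentifies why the constraint appears.

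Second, the residue bookkeeping. You try to match the five terms of the statement to five ``configurations'' of the variables ($h=k$ with $m_2n_1=m_1n_2$, generic splitting, two mixed cases, and a doubly-collapsed cross term), but this correspondence is not justified and does not reflect where the terms actually come from. In the paper the $(\log\ast\mu)(h_j)$ weights are encoded by inserting auxiliary shifts $\gamma_1,\gamma_2$ and taking $\partial^2/\partial\gamma_1\partial\gamma_2$ at $\gamma_1=\gamma_2=0$ of a single explicit quotient of zeta functions; the five pieces then arise as $L_{21}$, coming from the factor $\zeta''(1+u+v)-\zeta'(1+u+v)^2/\zeta(1+u+v)$, plus the four products obtained by expanding $L_{22}$ as a product of two two-term brackets $\zeta'/(\zeta\zeta)-\zeta'/(\zeta\zeta)$, each of which must then be evaluated by a one-dimensional contour argument together with the divisor-sum lemma (Lemma 3.6). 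Your sketch of separating $\Lambda$ and $\mu$ prime-by-prime is fine in spirit, but it stops precisely at the step where all the real work is done, and the heuristic matching you offer in its place is not a substitute. Finally, a smaller point: the variable $t$ does not come from the Mellin contour for $V_{\alpha,\beta}$; it is introduced afterwards via the identity $\frac{1-z^{-\alpha-\beta}}{\alpha+\beta}=(\log z)\int_0^1 z^{-(\alpha+\beta)t}\,dt$ when recombining $J_2^{+}$ and $J_2^{-}$, which is where the factor $(qy_2^{a+b-u-v})^{-(\alpha+\beta)t}$ originates.
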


The following corollaries are direct consequences.

\begin{coro}
Suppose $\vartheta_2<1$. We have
\begin{eqnarray*}
\frac{1}{\varphi^{+}(q)}{\sum_{\chi(\emph{mod }q)}\!\!\!\!}^+ \ L(\tfrac{1}{2},\chi)\psi_2(\chi)=\tfrac{\vartheta_2}{2}Q_1(1)+O(L^{-1+\varepsilon}).
\end{eqnarray*}
\end{coro}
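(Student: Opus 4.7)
The plan is straightforward: Corollary 2.1 should fall out by specializing Lemma 2.1 at the shift $\alpha=0$. First I would observe that the sum appearing in Corollary 2.1 is precisely $I(0)$ in the notation of the definition \eqref{5}, and that $\alpha=0$ clearly lies in the range of uniform validity $\alpha\ll L^{-1}$ for Lemma 2.1, so no new analytic work is required.

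Next I would evaluate the main term of Lemma 2.1 at $\alpha=0$. The factor $y^{-\alpha(1-x)}$ collapses to $1$, and from the definition $Q_1(x)=\int_0^x Q(u)\,du$ one immediately has $\int_0^1 Q(x)\,dx = Q_1(1)$. Thus the bracketed expression reduces to $\vartheta_2 Q_1(1) - \tfrac{\vartheta_2}{2} Q_1(1) = \tfrac{\vartheta_2}{2} Q_1(1)$, giving $I(0) = \varphi^+(q)\cdot\tfrac{\vartheta_2}{2}Q_1(1) + O(qL^{-1+\varepsilon})$.

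Finally I would divide through by $\varphi^+(q)$. Since $\varphi^+(q) = \tfrac{1}{2}\varphi^*(q) \gg q^{1-\varepsilon}$ by standard lower bounds for $\varphi^*(q)$, the error term $O(qL^{-1+\varepsilon})$ becomes $O(L^{-1+\varepsilon})$, yielding the desired formula. Since the corollary is just a substitution into Lemma 2.1 followed by an algebraic simplification, no step of this deduction presents any obstacle; all the substantive work is hidden in the proof of Lemma 2.1 itself, which must be treated separately in the body of the paper.
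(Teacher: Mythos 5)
Your proof is correct and is exactly the deduction intended by the paper, which states that Corollaries 2.1--2.3 are ``direct consequences'' of Lemmas 2.1--2.3. Setting $\alpha=0$ in Lemma 2.1, noting $\int_0^1 Q(x)\,dx = Q_1(1)$, and dividing by $\varphi^+(q)\gg q^{1-\varepsilon}$ gives the result immediately.
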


\begin{coro}
Suppose $\vartheta_2<\vartheta_1<\tfrac{1}{2}$. We have
\begin{eqnarray*}
\frac{1}{\varphi^{+}(q)}{\sum_{\chi(\emph{mod }q)}\!\!\!\!}^+ \ \big|L(\tfrac{1}{2},\chi)\big|^2\psi_1(\chi)\psi_2(\overline{\chi})&=&-\tfrac{\vartheta_2}{2} P(1)Q_1(1)+\vartheta_2\int_{0}^{1}P\big(1-\tfrac{\vartheta_2(1-x)}{\vartheta_1}\big)Q(x)dx\\
&&\qquad+\tfrac{\vartheta_2}{2\vartheta_1}\int_{0}^{1}P'\big(1-\tfrac{\vartheta_2(1-x)}{\vartheta_1}\big)Q(x)dx+O(L^{-1+\varepsilon}).
\end{eqnarray*}
\end{coro}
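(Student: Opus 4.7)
The plan is to read off the corollary from Lemma 2.2 by specialising the shifts. Observe first that $\overline{L(\tfrac12,\chi)}=L(\tfrac12,\overline{\chi})$, so $|L(\tfrac12,\chi)|^{2}=L(\tfrac12,\chi)L(\tfrac12,\overline{\chi})$, and therefore the left-hand side of the corollary equals $\varphi^{+}(q)^{-1}J_{1}(0,0)$. At $\alpha=\beta=0$ every factor of the form $y_{1}^{\beta a}y_{2}^{\alpha b-\beta u}$ or $(qy_{1}^{a}y_{2}^{b-u})^{-(\alpha+\beta)t}$ in \eqref{19} collapses to $1$, and the dummy integral over $t\in[0,1]$ gives a factor of $1$. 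The right-hand side of \eqref{19} then reduces to the mixed second partial $\partial^{2}/\partial a\partial b$, evaluated at $a=b=0$, of
\begin{equation*}
\tfrac{\vartheta_{2}}{\vartheta_{1}}\!\int_{0}^{1}\!\!\int_{0}^{x}\!\!\big(1+\vartheta_{1}a+\vartheta_{2}(b-u)\big)P\big(1-\tfrac{\vartheta_{2}(1-x)}{\vartheta_{1}}+a\big)Q(x-u+b)\,du\,dx-\tfrac{\vartheta_{2}}{2\vartheta_{1}}\!\int_{0}^{1}\!\!\big(1+\vartheta_{1}a+\vartheta_{2}b\big)P\big(1-\tfrac{\vartheta_{2}(1-x)}{\vartheta_{1}}+a\big)Q_{1}(x+b)\,dx.
\end{equation*}

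The next step is purely mechanical differentiation. Writing $\widetilde{P}=P(1-\vartheta_{2}(1-x)/\vartheta_{1})$ and $\widetilde{P}'=P'(1-\vartheta_{2}(1-x)/\vartheta_{1})$, and noting that the linear factor $1+\vartheta_{1}a+\vartheta_{2}(b-u)$ has vanishing $\partial^{2}/\partial a\partial b$, the product rule applied to the first integrand yields the three pieces
\begin{equation*}
\vartheta_{1}\widetilde{P}\,Q'(x-u)+\vartheta_{2}\widetilde{P}'\,Q(x-u)+(1-\vartheta_{2}u)\widetilde{P}'\,Q'(x-u),
\end{equation*}
and for the second integrand the three pieces
\begin{equation*}
\vartheta_{1}\widetilde{P}\,Q(x)+\vartheta_{2}\widetilde{P}'\,Q_{1}(x)+\widetilde{P}'\,Q(x),
\end{equation*}
using $Q_{1}'(x)=Q(x)$.

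For the first batch I would perform the substitution $v=x-u$ in the inner integrals. The first piece collapses via $\int_{0}^{x}Q'(v)\,dv=Q(x)$ (using $Q(0)=0$) to $\vartheta_{2}\int_{0}^{1}\widetilde{P}\,Q(x)\,dx$. The second becomes $(\vartheta_{2}^{2}/\vartheta_{1})\int_{0}^{1}\widetilde{P}'\,Q_{1}(x)\,dx$. For the third, integration by parts in $v$ turns $\int_{0}^{x}Q'(v)(1-\vartheta_{2}(x-v))\,dv$ into $Q(x)-\vartheta_{2}Q_{1}(x)$, producing $(\vartheta_{2}/\vartheta_{1})\int_{0}^{1}\widetilde{P}'\,Q(x)\,dx-(\vartheta_{2}^{2}/\vartheta_{1})\int_{0}^{1}\widetilde{P}'\,Q_{1}(x)\,dx$; the two $P'Q_{1}$ contributions cancel.

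The final step is a single integration by parts which I expect to be the only non-obvious manoeuvre. Combining what remains with the second batch, one still carries the term $-(\vartheta_{2}^{2}/2\vartheta_{1})\int_{0}^{1}\widetilde{P}'\,Q_{1}(x)\,dx$. Using $\tfrac{d}{dx}P(1-\vartheta_{2}(1-x)/\vartheta_{1})=(\vartheta_{2}/\vartheta_{1})\widetilde{P}'$ together with $Q_{1}(0)=0$ gives
\begin{equation*}
\int_{0}^{1}\widetilde{P}'\,Q_{1}(x)\,dx=\tfrac{\vartheta_{1}}{\vartheta_{2}}P(1)Q_{1}(1)-\tfrac{\vartheta_{1}}{\vartheta_{2}}\int_{0}^{1}\widetilde{P}\,Q(x)\,dx,
\end{equation*}
which introduces the boundary contribution $-\tfrac{\vartheta_{2}}{2}P(1)Q_{1}(1)$ and an extra $\tfrac{\vartheta_{2}}{2}\int_{0}^{1}\widetilde{P}\,Q(x)\,dx$ that combines with the earlier pieces to upgrade the coefficient of $\int_{0}^{1}\widetilde{P}\,Q(x)\,dx$ from $\vartheta_{2}/2$ to $\vartheta_{2}$. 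Collecting everything (and incorporating the error $O(qL^{-1+\varepsilon})/\varphi^{+}(q)=O(L^{-1+\varepsilon})$) yields exactly the stated identity. The only point of substance is to spot this last integration by parts; all the remaining work is bookkeeping of the product rule.
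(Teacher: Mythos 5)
Your proposal is correct and is exactly the verification the paper leaves implicit ("The following corollaries are direct consequences"): specialise Lemma 2.2 to $\alpha=\beta=0$, note that all the $y_1,y_2,q$ powers and the $t$-integral become trivial, carry out the mixed partial $\partial^2/\partial a\partial b$ via the product rule, simplify the $u$-integrals by the substitution $v=x-u$ together with one integration by parts in $v$ (using $Q(0)=0$), and finish with the integration by parts in $x$ built on $\frac{d}{dx}P\bigl(1-\tfrac{\vartheta_2(1-x)}{\vartheta_1}\bigr)=\tfrac{\vartheta_2}{\vartheta_1}P'\bigl(1-\tfrac{\vartheta_2(1-x)}{\vartheta_1}\bigr)$ and $Q_1(0)=0$, which produces the boundary term $-\tfrac{\vartheta_2}{2}P(1)Q_1(1)$ and upgrades the coefficient of $\int_0^1 P\bigl(1-\tfrac{\vartheta_2(1-x)}{\vartheta_1}\bigr)Q(x)\,dx$ from $\tfrac{\vartheta_2}{2}$ to $\vartheta_2$. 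Every intermediate expression you write checks out, and the claimed cancellation of the two $P'Q_1$ contributions from the first batch is correct, so the argument matches the paper's intent and the stated corollary.
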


\begin{coro}
Suppose $\vartheta<\tfrac{1}{2}$. We have
\begin{eqnarray*}
\frac{1}{\varphi^{+}(q)}{\sum_{\chi(\emph{mod }q)}\!\!\!\!}^+ \ \big|L(\tfrac{1}{2},\chi)\psi_2(\chi)\big|^2&=&\vartheta_{2}^{2}\int_{0}^{1}(1-x)Q(x)^2dx+\tfrac{\vartheta_2}{2}\int_{0}^{1}(1-x)^2Q'(x)^2dx\\
&&\quad\quad-\tfrac{\vartheta_{2}^{2}}{4}Q_1(1)^2+\tfrac{\vartheta_2}{4}\int_{0}^{1}Q(x)^2dx+O(L^{-1+\varepsilon}).
\end{eqnarray*}
\end{coro}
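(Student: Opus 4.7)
The plan is to deduce Corollary 2.3 directly from Lemma 2.3 by specializing to $\alpha=\beta=0$. At this point every factor $y_2^{c_1\alpha+c_2\beta}$ and $(qy_2^{\cdots})^{-(\alpha+\beta)t}$ equals $1$ uniformly, so the $t$-integration (where present) contributes only a factor of $1$, and the bracket inside the outer $\partial_a\partial_b$ collapses to a sum of five expressions, each of the form
\begin{equation*}
\bigl(1+\vartheta_2\cdot\text{linear}(a,b)\bigr)\cdot\bigl(\text{integral of products of }Q,\,Q_1\text{ at shifted arguments}\bigr).
\end{equation*}
The remaining task is a mechanical evaluation of $\partial_a\partial_b[\,\cdot\,]|_{a=b=0}$, and the error term $O(qL^{-1+\varepsilon})$ is inherited directly from Lemma 2.3.

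Leibniz gives, for a typical summand $(1+\vartheta_2(a+b-X))f(a)g(b)$,
\begin{equation*}
\vartheta_2 f'(0)g(0)+\vartheta_2 f(0)g'(0)+(1-\vartheta_2 X)f'(0)g'(0),
\end{equation*}
and $Q_1'=Q$ handles derivatives of the $Q_1$ factors. Applying this to the two "diagonal" terms of Lemma 2.3 (the $(1-x)^2 Q(x+a)Q(x+b)$ piece and the $Q_1(x+a)Q_1(x+b)$ piece), integration by parts using $Q(0)=0$ collapses $\int_0^1(1-x)^2 Q(x)Q'(x)\,dx$ to $\int_0^1(1-x)Q(x)^2\,dx$ and $\int_0^1 Q(x)Q_1(x)\,dx$ to $\tfrac12 Q_1(1)^2$. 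These diagonal contributions reproduce the first two terms of the corollary together with $+\tfrac{\vartheta_2^2}{4}Q_1(1)^2+\tfrac{\vartheta_2}{4}\int_0^1 Q(x)^2\,dx$.

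The three remaining, nested-integral, terms require a change of variables $s=x-u$ (and $t=x-v$ in the triple integral) to decouple the $Q$-arguments from $x$. After applying $\int_0^x Q'(s)\,ds=Q(x)$, $\int_0^x Q(s)\,ds=Q_1(x)$, and the by-parts identity $\int_0^x sQ'(s)\,ds=xQ(x)-Q_1(x)$, each such term reduces to a combination of $\int_0^1 Q(x)^2\,dx$ and $Q_1(1)^2$. A direct check shows that the $\int Q^2$ contributions from these three terms cancel exactly, while the $Q_1(1)^2$ contributions of Terms 3 and 4 (related by $a\leftrightarrow b$ symmetry) combine to $-\tfrac{\vartheta_2^2}{2}Q_1(1)^2$; added to the $+\tfrac{\vartheta_2^2}{4}Q_1(1)^2$ from the $Q_1Q_1$ piece, this yields the $-\tfrac{\vartheta_2^2}{4}Q_1(1)^2$ in the statement.

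The main obstacle is no more than careful bookkeeping of signs, prefactors, and the $a\leftrightarrow b$ symmetry between the two mixed $QQ_1$ terms; no new analytic input is needed beyond the product rule, integration by parts, and the identity $Q_1'=Q$. Once the cancellations are tracked faithfully, the asymptotic expression in the corollary is immediate.
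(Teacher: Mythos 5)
Your proposal is correct and follows exactly the route the paper intends: set $\alpha=\beta=0$ in Lemma 2.3, expand the $\partial_a\partial_b$ derivative term by term, change variables to decouple the shifted $Q$-arguments, and apply integration by parts with $Q(0)=Q_1(0)=0$ to collapse everything to $\int_0^1(1-x)Q^2\,dx$, $\int_0^1(1-x)^2Q'^2\,dx$, $Q_1(1)^2$, and $\int_0^1Q^2\,dx$. The paper states only that the corollaries are ``direct consequences,'' and your bookkeeping (Terms 1 and 5 producing the first two terms plus $+\tfrac{\vartheta_2^2}{4}Q_1(1)^2+\tfrac{\vartheta_2}{4}\int Q^2$; the $\int Q^2$ contributions from the triple-integral and the two mixed terms canceling; the two mixed $QQ_1$ terms contributing $-\tfrac{\vartheta_2^2}{2}Q_1(1)^2$) reproduces the claimed expression correctly.
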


Collecting these results, we obtain Theorem 2.1 and Theorem 2.2.

\section{Various lemmas}

In this section we collect some preliminary results which we will use later. 

\begin{lemma}
For $(mn,q)=1$ we have
\begin{displaymath}
{\sum_{\chi(\emph{mod}\ q)}{\!\!\!\!\!\!}}^{+}\ \chi(m)\overline{\chi}(n)=\tfrac{1}{2}\sum_{\substack{q=dr\\r|m\pm n}}\mu(d)\varphi(r).
\end{displaymath}
\end{lemma}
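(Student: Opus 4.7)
The plan is to start from the standard orthogonality relation for all characters modulo $q$ and peel off two layers: first the restriction to primitive characters via Möbius inversion, then the restriction to even characters via the projector $\tfrac{1}{2}(1+\chi(-1))$.

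First I would recall that every Dirichlet character modulo $q$ is induced from a unique primitive character modulo some divisor of $q$, so that
\begin{equation*}
\sum_{\chi(\text{mod } q)} \chi(m)\overline{\chi}(n) = \sum_{d \mid q} \,\sideset{}{^*}\sum_{\chi(\text{mod } d)} \chi(m)\overline{\chi}(n),
\end{equation*}
where I use the hypothesis $(mn,q)=1$ to ensure that the induced character agrees with the primitive character at $m$ and $n$. Möbius inversion on the divisor poset of $q$ then yields
\begin{equation*}
\sideset{}{^*}\sum_{\chi(\text{mod } q)} \chi(m)\overline{\chi}(n) = \sum_{d \mid q} \mu(q/d) \sum_{\chi(\text{mod } d)} \chi(m)\overline{\chi}(n).
\end{equation*}
Applying the usual orthogonality $\sum_{\chi(\text{mod } d)} \chi(m)\overline{\chi}(n) = \varphi(d)$ when $m\equiv n \pmod d$ (and $0$ otherwise, noting $(mn,d)=1$ follows from $(mn,q)=1$), and writing $q=dr$ with $r$ playing the role of the original $d$, I obtain the ``unsigned'' primitive orthogonality
\begin{equation*}
\sideset{}{^*}\sum_{\chi(\text{mod } q)} \chi(m)\overline{\chi}(n) = \sum_{\substack{q=dr \\ r \mid m-n}} \mu(d)\varphi(r).
\end{equation*}

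Next I would insert the even-character projector, writing
\begin{equation*}
\sideset{}{^{*+}}\sum_{\chi(\text{mod } q)} \chi(m)\overline{\chi}(n) = \tfrac{1}{2}\sideset{}{^*}\sum_{\chi}\chi(m)\overline{\chi}(n) + \tfrac{1}{2}\sideset{}{^*}\sum_{\chi}\chi(-1)\chi(m)\overline{\chi}(n),
\end{equation*}
and use $\chi(-1)\chi(m)=\chi(-m)$ in the second sum. Applying the formula just derived to the first sum gives the $r\mid m-n$ contribution, while applying it to the second with $m$ replaced by $-m$ gives the $r\mid m+n$ contribution (both with their full multiplicity, even when $r$ divides both $m-n$ and $m+n$, which is consistent with the $\pm$ notation). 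Adding them produces the stated identity.

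The argument is essentially formal manipulation of well-known orthogonality relations, so there is no serious obstacle; the only points requiring a bit of care are checking that $(mn,q)=1$ passes down to all divisors $d\mid q$ so that characters modulo $d$ do not vanish at $m$ or $n$, and verifying that the $\pm$ notation in the statement is interpreted as a sum of the two independent conditions $r\mid m-n$ and $r\mid m+n$ (rather than as an ``or'') so that the $\tfrac{1}{2}$ from the even projector reproduces the prefactor in the lemma.
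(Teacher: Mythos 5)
Your proof is correct and is the standard argument: the paper itself gives no proof, simply citing this as a known orthogonality relation (referring to Lemma 4.1 of the Bui--Milinovich paper), and the derivation there is exactly the two-step process you describe — Möbius inversion over conductors to pass from all characters to primitive ones, followed by insertion of the projector $\tfrac{1}{2}(1+\chi(-1))$ to isolate the even characters, with $\chi(-1)\chi(m)=\chi(-m)$ converting the second piece into the $r\mid m+n$ condition. Your parenthetical remarks about $(mn,q)=1$ propagating to divisors and about the additive (rather than disjunctive) reading of the $\pm$ notation are the right points to flag; the argument is complete.
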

\begin{proof}
This is standard. See, for example, Lemma 4.1 [\textbf{\ref{BM1}}].
\end{proof}

\begin{lemma}
Let
\begin{equation}\label{9}
V(x)=\frac{1}{2\pi i}\int_{(2)}e^{s^2}x^{-s}\frac{ds}{s}.
\end{equation}
Then for $\chi\in\mathscr{C}_{q}^{+}$ and any $B>0$ we have 
\begin{eqnarray*}
L({\scriptstyle{\frac{1}{2}}}+\alpha,\chi)=\sum_{m\geq1}\frac{\chi(m)}{m^{1/2+\alpha}}V\bigg(\frac{m}{q^{1+\varepsilon}}\bigg)+O(q^{-B}).
\end{eqnarray*}
\end{lemma}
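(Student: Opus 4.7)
The plan is to realize the right-hand side as a contour integral against $L(s,\chi)$ itself, and then shift the contour across the pole at $s=0$ to recover $L(\tfrac12+\alpha,\chi)$ plus a negligible tail.

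Concretely, I would start from the line $\Re(s)=2$ and consider
\begin{equation*}
\mathcal{I}:=\frac{1}{2\pi i}\int_{(2)}L(\tfrac{1}{2}+\alpha+s,\chi)\,e^{s^2}q^{(1+\varepsilon)s}\frac{ds}{s}.
\end{equation*}
On this line the Dirichlet series $L(\tfrac{1}{2}+\alpha+s,\chi)=\sum_{m\ge1}\chi(m)m^{-1/2-\alpha-s}$ converges absolutely, and the Gaussian factor $e^{s^2}$ makes the double series absolutely convergent jointly in $m$ and $s$. Interchanging sum and integral and recognising the inner integral as $V(m/q^{1+\varepsilon})$ by \eqref{9} gives
\begin{equation*}
\mathcal{I}=\sum_{m\ge1}\frac{\chi(m)}{m^{1/2+\alpha}}V\!\left(\frac{m}{q^{1+\varepsilon}}\right).
\end{equation*}

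Next I would move the contour from $\Re(s)=2$ to $\Re(s)=-B'$ for some large parameter $B'$ to be chosen in terms of $B$. Since $\chi$ is primitive, $L(\tfrac{1}{2}+\alpha+s,\chi)$ is entire, so the only pole crossed is the simple pole at $s=0$ coming from $1/s$; its residue equals $L(\tfrac{1}{2}+\alpha,\chi)$ (using $e^{0}=1$ and $q^{0}=1$). Consequently
\begin{equation*}
\mathcal{I}=L(\tfrac{1}{2}+\alpha,\chi)+\frac{1}{2\pi i}\int_{(-B')}L(\tfrac{1}{2}+\alpha+s,\chi)\,e^{s^2}q^{(1+\varepsilon)s}\frac{ds}{s},
\end{equation*}
and it remains to show the shifted integral is $O(q^{-B})$.

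The main technical point is the bound on the line $\Re(s)=-B'$, where $L$ must be estimated deep in the region of absolute divergence. I would invoke the functional equation $\Lambda(s,\chi)=\varepsilon_{\chi}\Lambda(1-s,\overline{\chi})$ together with Stirling's formula to obtain a convexity-type bound of the form $|L(\tfrac{1}{2}+\alpha+s,\chi)|\ll q^{1/2+B'/2}(1+|\Im s|)^{C(B')}$ uniformly on this vertical line. Pairing this with $|e^{s^2}|=e^{B'^2-(\Im s)^2}$, which secures rapid decay in $|\Im s|$ so that the $t$-integral converges to an absolute constant depending only on $B'$, and with $|q^{(1+\varepsilon)s}|=q^{-(1+\varepsilon)B'}$, the integrand is bounded by
\begin{equation*}
\ll_{B'}\;q^{1/2+B'/2-(1+\varepsilon)B'}=q^{1/2-B'/2-\varepsilon B'}.
\end{equation*}
Taking $B'$ sufficiently large in terms of $B$ makes this $O(q^{-B})$, completing the proof. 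The only delicate step is the uniform convexity estimate on the far-left line; once the functional equation and Stirling are invoked carefully, everything else reduces to a standard shift-of-contour argument of Mellin type.
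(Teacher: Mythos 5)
Your proposal is essentially the paper's proof: define the contour integral $\mathcal{I}=\frac{1}{2\pi i}\int_{(2)}L(\tfrac{1}{2}+\alpha+s,\chi)e^{s^2}q^{(1+\varepsilon)s}\frac{ds}{s}$, expand the Dirichlet series term by term to recover the right-hand side, shift the contour far to the left picking up the residue $L(\tfrac12+\alpha,\chi)$ at $s=0$, and bound the shifted integral. The only blemish is the $L$-bound you quote on $\Re(s)=-B'$: the correct estimate from the functional equation and Stirling for $\sigma<0$ is $L(\sigma+it,\chi)\ll(q(1+|t|))^{1/2-\sigma}$, which at $\sigma\approx\tfrac12-B'$ gives $q$-exponent $\approx B'$, not $\tfrac12+B'/2$; with the correct bound the integrand is $\ll_{B'}q^{-\varepsilon B'}$, so you still conclude $O(q^{-B})$ by taking $B'\gg B/\varepsilon$, as in the paper.
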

\begin{proof}
Consider
\begin{equation*}
A=\frac{1}{2\pi i}\int_{(2)}X^se^{s^2}L(\tfrac{1}{2}+\alpha+s,\chi)\frac{ds}{s}.
\end{equation*}
We move the line of integration to $\textrm{Re}(s)=-N$, crossing a simple pole at $s=0$. On the new contour, we use the decay of $e^{s^2}$ and the bound $L(\sigma+it,\chi)\ll \big(q(1+|t|)\big)^{1/2-\sigma}$ for $\sigma<0$. In doing so we obtain 
\begin{equation*}
A=L(\tfrac{1}{2}+\alpha,\chi)+O(X^{-N}q^N).
\end{equation*}
We now take $X=q^{1+\varepsilon}$ and choose $N$ large enough with respect to $\varepsilon$. Finally expressing the $L$-function in the integral as Dirichlet series we obtain the lemma.
\end{proof}

\begin{lemma}
Let
\begin{displaymath}
\mathscr{A}(h,k)={\sum_{\chi(\emph{mod}\ \!q)}{\!\!\!\!\!}}^{+}\ L({\scriptstyle{\frac{1}{2}}}+\alpha,\chi)\overline{\chi}(h)\chi(k).
\end{displaymath}
Then for $(hk,q)=1$ and uniformly for $\alpha\ll L^{-1}$ we have
\begin{eqnarray}\label{3}
\mathscr{A}(h,k)=\varphi^{+}(q){\sum_{mk=h}\!\!}^{*}\ \ \frac{1}{m^{1/2+\alpha}}V\bigg(\frac{m}{q^{1+\varepsilon}}\bigg)+O(E_1(h,k)+q^{-B}),
\end{eqnarray}
where $E_1(h,k)$ satisfies
\begin{displaymath}
\sum_{hk\leq y}\frac{E_1(h,k)}{\sqrt{hk}}\ll (yq)^{1/2+\varepsilon}.
\end{displaymath}
\end{lemma}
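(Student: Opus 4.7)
The plan is to open the $L$-function inside $\mathscr{A}(h,k)$ via the approximate functional equation of Lemma 3.2, reduce the resulting twisted character sum using Lemma 3.1, and then separate the diagonal term $mk=h$ (giving the main term) from the off-diagonal congruence contributions (which are bundled into the averaged error $E_1(h,k)$).

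Concretely, after substituting Lemma 3.2 into the definition of $\mathscr{A}(h,k)$, combining $\chi(m)\overline{\chi}(h)\chi(k)=\chi(mk)\overline{\chi}(h)$, and interchanging the $m$ and $\chi$ sums, I would obtain
$$\mathscr{A}(h,k)=\sum_{m\geq1}\frac{1}{m^{1/2+\alpha}}V\!\left(\frac{m}{q^{1+\varepsilon}}\right){\sum_{\chi(\text{mod }q)}\!\!\!\!}^{+}\chi(mk)\overline{\chi}(h)+O(q^{-B}),$$
with the $m$-sum effectively truncated at $q^{1+\varepsilon}$ by the super-polynomial decay of $V$. Applying Lemma 3.1 to the inner character sum then rewrites it as $\tfrac12\sum_{q=dr,\,r\mid mk\pm h}\mu(d)\varphi(r)$.

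The main term arises from the ``$-$'' sign together with $mk=h$, since in that case $r\mid 0$ holds for every divisor $r\mid q$, so the inner sum collapses to $\tfrac12\sum_{q=dr}\mu(d)\varphi(r)=\tfrac12\varphi^{*}(q)=\varphi^{+}(q)$. The equation $mk=h$ has at most one solution, $m=h/k$ when $k\mid h$, and $(m,q)=1$ is automatic from the hypothesis $(h,q)=1$, reproducing the starred diagonal sum in the statement. Everything else (the ``$-$'' sign with $mk\neq h$ and the entire ``$+$'' sign case, since $mk+h>0$) is packaged into $E_1(h,k)$.

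The main technical obstacle is the stated average bound on $E_1(h,k)$. To handle it I would swap the order of summation so that $r\mid q$ sits outside; for each fixed $r$, the coprimality $(k,q)=1$ means $k$ is invertible modulo $r$, so the condition $mk\equiv\pm h\pmod r$ pins $m$ to a single residue class modulo $r$. The elementary bound
$$\sum_{\substack{m\leq M\\m\equiv a\,(r)}}\frac{1}{\sqrt m}\ll\frac{\sqrt M}{r}+1,$$
applied with $M=q^{1+\varepsilon}$, followed by the average $\sum_{hk\leq y}(hk)^{-1/2}\ll\sqrt{y}\log y$, contributes for each fixed $r$ at most $\varphi(r)(\sqrt M/r)\sqrt y\log y$; summing over $r\mid q$ via $\sum_{r\mid q}\varphi(r)/r\ll d(q)\ll q^{\varepsilon}$ yields the desired $(yq)^{1/2+\varepsilon}$. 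The ``$+1$'' boundary terms require a little extra care: here one uses the off-diagonal constraint $|mk\pm h|\geq r$ to extract the additional savings needed to absorb the small-$r$ contributions. Finally, the tail $m>q^{1+\varepsilon}$ is absorbed into $O(q^{-B})$ by the Gaussian decay of $V$.
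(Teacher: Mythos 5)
Your proposal follows the same route as the paper: substitute the approximate functional equation of Lemma 3.2, apply the orthogonality relation of Lemma 3.1 to the twisted character sum, read off the main term from the diagonal $mk=h$, and dump the remaining congruence contributions into $E_1(h,k)$. Your identification of the diagonal (the $-$ sign with $mk=h$, $m=h/k$ automatically coprime to $q$) and your observation that the tail $m>q^{1+\varepsilon}$ is negligible by the decay of $V$ are both correct and agree with the paper, whose proof consists of exactly this reduction followed by the remark that the averaged bound on $E_1$ ``can easily'' be shown from $V(x)\ll(1+|x|)^{-1}$.

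Where you go beyond the paper's terse exposition is the averaged error bound, and there your sketch is on the right track but has one misstatement worth flagging. You are right that the crude $\sum_{m\leq M,\ m\equiv a\,(r)}m^{-1/2}\ll\sqrt M/r+1$ is insufficient as written, because $\sum_{r\mid q}\varphi(r)\cdot1\asymp q$ and the resulting $q\sqrt y\log y$ exceeds $(yq)^{1/2+\varepsilon}$; and you are right that the rescue is the divisibility $r\mid mk\pm h$ together with $mk\pm h\neq0$, which forces $|mk\pm h|\geq r$ and hence pushes the smallest admissible $m$ (or, after reordering, $h$) up. However, the boundary terms that cause trouble are those with $r$ \emph{large} (roughly $r>\sqrt M$), where $\sqrt M/r<1$, not the small-$r$ ones as you say; for small $r$ the $\sqrt M/r$ term already dominates the $+1$. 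Carrying the $|mk\pm h|\geq r$ constraint through the $(h,k)$-average — e.g.\ by fixing $r,k$, parametrising $m$ by its least positive residue $c$ modulo $r$, and noting that the admissible $h$ then run over an arithmetic progression starting at height $\gtrsim\max(r,ck)$ — produces the extra factor of $\sqrt{r}$ in the denominator needed so that $\sum_{r\mid q}\varphi(r)/\sqrt r\ll q^{1/2+\varepsilon}$ closes the estimate. So the idea you name is the correct one; only the ``small-$r$'' attribution should be ``large-$r$'', and the implementation requires the residue-class bookkeeping to be done jointly in $(m,h,k)$ rather than for $m$ alone.
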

\begin{proof}
In view of Lemma 3.1 and Lemma 3.2 we have
\begin{eqnarray}\label{4}
\mathscr{A}(h,k)=\tfrac{1}{2}\sum_{q=dr}\mu(d)\varphi(r){\sum_{r|mk\pm h}{\!\!\!\!\!}}^{*}\ \ \frac{1}{m^{1/2+\alpha}}V\bigg(\frac{m}{q^{1+\varepsilon}}\bigg)+O(q^{-B}).
\end{eqnarray}
The diagonal terms $mk=h$ gives the main term visible in \eqref{3}. All the other terms in \eqref{4} contribute at most
\begin{displaymath}
E_1(h,k)=\sum_{mk\ne h}\frac{(mk\pm h,q)}{\sqrt{m}}\bigg|V\bigg(\frac{m}{q^{1+\varepsilon}}\bigg)\bigg|.
\end{displaymath}
Using the estimate $V(x)\ll(1+|x|)^{-1}$ one can easily show that
\begin{equation*}
\sum_{hk\leq y}\frac{E_1(h,k)}{\sqrt{hk}}\ll (yq)^{1/2+\varepsilon}.
\end{equation*}
The lemma follows.
\end{proof}

\begin{lemma}
Let $G(s)=e^{s^2}p(s)$ and $p(s)=\tfrac{(\alpha+\beta)^2-4s^2}{(\alpha+\beta)^2}$. Let
\begin{equation}\label{12}
W_{\alpha,\beta}^{\pm}(x)=\frac{1}{2\pi i}\int_{(2)}G(s)g_{\alpha,\beta}^{\pm}(s)x^{-s}\frac{ds}{s},
\end{equation}
where
\begin{displaymath}
g_{\alpha,\beta}^{+}(s)=\frac{\Gamma(\frac{1/2+\alpha+s}{2})\Gamma(\frac{1/2+\beta+s}{2})}{\Gamma(\frac{1/2+\alpha}{2})\Gamma(\frac{1/2+\beta}{2})}\quad \textrm{and}\quad \ g_{\alpha,\beta}^{-}(s)=\frac{\Gamma(\frac{1/2-\alpha+s}{2})\Gamma(\frac{1/2-\beta+s}{2})}{\Gamma(\frac{1/2+\alpha}{2})\Gamma(\frac{1/2+\beta}{2})}.
\end{displaymath}
Then for $\chi\in\mathscr{C}_{q}^{+}$ we have 
\begin{eqnarray*}
L({\scriptstyle{\frac{1}{2}}}+\alpha,\chi)L({\scriptstyle{\frac{1}{2}}}+\beta,\overline{\chi})&=&\sum_{m,n\geq1}\frac{\chi(m)\overline{\chi}(n)}{m^{1/2+\alpha}n^{1/2+\beta}}W_{\alpha,\beta}^{+}\bigg(\frac{\pi mn}{q}\bigg)\nonumber\\
&&\quad+\bigg(\frac{q}{\pi}\bigg)^{-\alpha-\beta}\sum_{m,n\geq1}\frac{\overline{\chi}(m)\chi(n)}{m^{1/2-\alpha}n^{1/2-\beta}}W_{\alpha,\beta}^{-}\bigg(\frac{\pi mn}{q}\bigg).
\end{eqnarray*}
\end{lemma}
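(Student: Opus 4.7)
The strategy is the standard Mellin--Barnes derivation of an approximate functional equation: introduce an auxiliary contour integral whose residue at $s=0$ recovers the product of $L$-values, read off the first sum from the Dirichlet-series expansion on the initial line $\mathrm{Re}(s)=2$, and produce the second sum by pushing the contour past $s=0$ and invoking the functional equation on the shifted line.

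Concretely, I would start from
$$\mathcal{I}=\frac{1}{2\pi i}\int_{(2)}L(\tfrac{1}{2}+\alpha+s,\chi)L(\tfrac{1}{2}+\beta+s,\overline{\chi})\,g_{\alpha,\beta}^{+}(s)\left(\frac{q}{\pi}\right)^{s}G(s)\frac{ds}{s}.$$
Both $L$-factors are absolutely convergent Dirichlet series on $\mathrm{Re}(s)=2$, so interchanging sum and integral and appealing to the definition of $W_{\alpha,\beta}^{+}$ identifies $\mathcal{I}$ immediately with the first sum in the claimed identity. Next I would shift the contour down to $\mathrm{Re}(s)=-2$. Since $G(0)=p(0)=1$ and $g^{+}_{\alpha,\beta}(0)=1$, the residue at $s=0$ equals exactly $L(\tfrac{1}{2}+\alpha,\chi)L(\tfrac{1}{2}+\beta,\overline{\chi})$, giving $\mathcal{I} = L(\tfrac12+\alpha,\chi)L(\tfrac12+\beta,\overline{\chi}) + \mathcal{I}_{-}$, where $\mathcal{I}_{-}$ denotes the integral on the line $(-2)$.

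On this shifted line I would apply the functional equation $\Lambda(s,\chi)=\varepsilon_{\chi}\Lambda(1-s,\overline{\chi})$ to both $\Lambda$-factors, using $\varepsilon_{\chi}\varepsilon_{\overline{\chi}}=|\varepsilon_{\chi}|^{2}=1$ to replace the product by $\Lambda(\tfrac{1}{2}-\alpha-s,\overline{\chi})\Lambda(\tfrac{1}{2}-\beta-s,\chi)$. The substitution $s\mapsto -s$ then returns the integration to the line $(2)$; crucially, $G(s)$ is preserved because both $e^{s^{2}}$ and $p(s)$ are even functions of $s$, and the ratio of gamma factors rearranges into precisely $g^{-}_{\alpha,\beta}(s)$, while the overall shift of the exponentials in $q/\pi$ accounts for the prefactor $(q/\pi)^{-\alpha-\beta}$. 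Expanding the (now absolutely convergent) Dirichlet series for $L(\tfrac12-\alpha+s,\overline{\chi})L(\tfrac12-\beta+s,\chi)$ and recognising the resulting $s$-integral as $W_{\alpha,\beta}^{-}(\pi mn/q)$ yields $\mathcal{I}_{-} = -(q/\pi)^{-\alpha-\beta}\sum_{m,n}\overline{\chi}(m)\chi(n)m^{-1/2+\alpha}n^{-1/2+\beta}W_{\alpha,\beta}^{-}(\pi mn/q)$. Rearranging $L\cdot L = \mathcal{I} - \mathcal{I}_{-}$ then gives the lemma.

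The only step requiring a moment's thought is the legitimacy of the contour shift: $g^{+}_{\alpha,\beta}$ has simple poles at $s=-\tfrac{1}{2}-\alpha$ and $s=-\tfrac{1}{2}-\beta$, both of which lie inside the strip $-2<\mathrm{Re}(s)<2$. However, since $\chi$ is even and primitive, the completed $L$-function $\Lambda(s,\chi)=(q/\pi)^{s/2}\Gamma(s/2)L(s,\chi)$ is entire, which forces $L(0,\chi)=0$ in order to cancel the simple pole of $\Gamma(s/2)$ at the origin; translating this, $L(\tfrac{1}{2}+\alpha+s,\chi)$ has a simple zero at $s=-\tfrac{1}{2}-\alpha$ that cancels the pole of $\Gamma((\tfrac{1}{2}+\alpha+s)/2)$ appearing in $g^{+}_{\alpha,\beta}$, and symmetrically at $s=-\tfrac{1}{2}-\beta$. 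Hence the integrand is holomorphic throughout the strip apart from $s=0$. The polynomial $p(s)$ plays no role in this lemma---any $G$ with $G(0)=1$ and sufficient decay on vertical lines would suffice---and is built into $G(s)$ with an eye toward subsequent mollified moment calculations, where its zeros at $s=\pm(\alpha+\beta)/2$ will kill the poles of the $\zeta$-factors arising after averaging over $\chi$.
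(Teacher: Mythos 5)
Your proof is correct and is the standard Mellin--Barnes derivation of the approximate functional equation; the paper itself only cites Lemma 4.2 of [BM1] (Bui--Milinovich), which carries out exactly this argument. All the key points check out: $G(0)=g^{+}_{\alpha,\beta}(0)=1$ gives the residue $L(\tfrac12+\alpha,\chi)L(\tfrac12+\beta,\overline{\chi})$ at $s=0$; the evenness of $G$ and the identity $\varepsilon_{\chi}\varepsilon_{\overline{\chi}}=1$ (which uses $\chi(-1)=1$ to get $\varepsilon_{\overline{\chi}}=\overline{\varepsilon_{\chi}}$) convert the integral on $\mathrm{Re}(s)=-2$ into the $W^{-}_{\alpha,\beta}$ sum with the $(q/\pi)^{-\alpha-\beta}$ prefactor; and the would-be poles of $g^{+}_{\alpha,\beta}$ at $s=-\tfrac12-\alpha,-\tfrac12-\beta$ are indeed cancelled by the trivial zeros $L(0,\chi)=L(0,\overline{\chi})=0$ of the even primitive characters. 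Your closing observation on the role of $p(s)$ is also accurate---its zeros at $s=\pm\tfrac{\alpha+\beta}{2}$ are exactly what cancels the pole of $\zeta(1+\alpha+\beta+2s)$ in the later computation of $J_{1}$ and $J_{2}$, and is not needed for this lemma.
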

\begin{proof}
This is Lemma 4.2 in [\textbf{\ref{BM1}}].
\end{proof}

\begin{lemma}
Let
\begin{displaymath}
\mathscr{B}(h,k)={\sum_{\chi(\emph{mod}\ \!q)}{\!\!\!\!\!}}^{+}\ L({\scriptstyle{\frac{1}{2}}}+\alpha,\chi)L({\scriptstyle{\frac{1}{2}}}+\beta,\overline{\chi})\overline{\chi}(h)\chi(k).
\end{displaymath}
Then for $(hk,q)=1$ and uniformly for $\alpha,\beta\ll L^{-1}$ we have
\begin{eqnarray}\label{2}
\mathscr{B}(h,k)&=&\varphi^{+}(q)\bigg({\sum_{mk=nh}{\!\!\!\!}}^{*}\ \ \frac{W_{\alpha,\beta}^{+}(\frac{\pi mn}{q})}{m^{1/2+\alpha}n^{1/2+\beta}}+\bigg(\frac{q}{\pi}\bigg)^{-\alpha-\beta}{\sum_{mh=nk}{\!\!\!\!}}^{*}\ \ \frac{W_{\alpha,\beta}^{-}(\frac{\pi mn}{q})}{m^{1/2-\alpha}n^{1/2-\beta}}\bigg)\nonumber\\
&&\qquad\qquad+O(E_2(h,k)),
\end{eqnarray}
where $E_2(h,k)$ satisfies
\begin{displaymath}
\sum_{hk\leq y}\frac{E_2(h,k)}{\sqrt{hk}}\ll (yq)^{1/2+\varepsilon}.
\end{displaymath}
\end{lemma}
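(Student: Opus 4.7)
The plan is to follow the template used for Lemma 3.3, with the one-variable approximate functional equation (Lemma 3.2) replaced by the two-variable version supplied by Lemma 3.4. First, I expand the product $L(\tfrac{1}{2}+\alpha,\chi)L(\tfrac{1}{2}+\beta,\overline{\chi})$ inside $\mathscr{B}(h,k)$ via Lemma 3.4, producing two absolutely convergent double Dirichlet series weighted by $W_{\alpha,\beta}^{+}$ and $W_{\alpha,\beta}^{-}$. After interchanging the order of summation, the inner character sum in each piece takes the form $\sum^{+}\chi(mk)\overline{\chi}(nh)$ and $\sum^{+}\overline{\chi}(mh)\chi(nk)$ respectively (nonzero only when $(mn,q)=1$, so Lemma 3.1 applies), and these evaluate to $\tfrac{1}{2}\sum_{q=dr,\ r\mid mk\pm nh}\mu(d)\varphi(r)$ and its analogue.

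Then I separate diagonal from off-diagonal contributions. The diagonal $mk=nh$ (respectively $mh=nk$), for which the condition $r\mid mk-nh=0$ is vacuous, causes the inner divisor sum to collapse to $\tfrac{1}{2}\sum_{q=dr}\mu(d)\varphi(r)=\varphi^{+}(q)$, producing precisely the two main sums recorded in \eqref{2}; the factor $(q/\pi)^{-\alpha-\beta}$ on the $W^{-}$ side is inherited from the normalisation in Lemma 3.4. Everything else — the $r\mid mk+nh$ contribution, the part of $r\mid mk-nh$ with $mk\ne nh$, and the two analogous contributions coming from the $W^{-}$ piece — is absorbed into $E_{2}(h,k)$.

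To bound $E_{2}(h,k)$, I would first record the standard decay estimate $W_{\alpha,\beta}^{\pm}(x)\ll_{A}(1+x)^{-A}$, uniform for $\alpha,\beta\ll L^{-1}$, obtained by shifting the contour in \eqref{12} far to the right and using Stirling for $g_{\alpha,\beta}^{\pm}(s)$ together with the exponential decay of $G(s)=e^{s^{2}}p(s)$ in vertical strips. This effectively confines $mn\ll q^{1+\varepsilon}$, after which the argument proceeds exactly as for $E_{1}$ in Lemma 3.3: the off-diagonal contribution is controlled by
\[
\sum_{\substack{(mn,q)=1\\ mk\ne\pm nh}}\frac{(mk\pm nh,q)}{\sqrt{mn}}\,\Big|W_{\alpha,\beta}^{+}\!\big(\tfrac{\pi mn}{q}\big)\Big|\;+\;\text{(analogous $W^{-}$ piece)},
\]
and after summing over $hk\le y$, bounding $(mk\pm nh,q)\le\sum_{d\mid q,\,d\mid mk\pm nh}d$ and performing a standard divisor-type estimate yields the claimed $\ll(yq)^{1/2+\varepsilon}$. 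The principal obstacle I anticipate is purely organisational: compared with Lemma 3.3 we now have two transforms $W^{\pm}$ and two sign configurations $mk\ne\pm nh$ (respectively $mh\ne\pm nk$) to handle, and one must verify that the additional $r\mid m+n$-type terms combine without exceeding $(yq)^{1/2+\varepsilon}$, uniformly in the shifts $\alpha,\beta\ll L^{-1}$.
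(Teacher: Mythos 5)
Your proposal is essentially the same as the paper's proof: apply Lemma 3.4 to expand the product of $L$-values, interchange summation, evaluate the resulting character sums $\sum^{+}\chi(mk)\overline{\chi}(nh)$ and $\sum^{+}\overline{\chi}(mh)\chi(nk)$ via Lemma 3.1, isolate the diagonals $mk=nh$ and $mh=nk$ to recover the main term $\varphi^{+}(q)$ from $\tfrac{1}{2}\sum_{q=dr}\mu(d)\varphi(r)$, and absorb the rest into $E_2(h,k)$ using the decay of $W^{\pm}_{\alpha,\beta}$. The only cosmetic difference is that you invoke the stronger decay $W^{\pm}_{\alpha,\beta}(x)\ll_A(1+x)^{-A}$, whereas the paper only needs and records $W^{\pm}_{\alpha,\beta}(x)\ll(1+|x|)^{-1}$; both suffice here.
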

\begin{proof}
In view of Lemma 3.1 and Lemma 3.4 we have
\begin{eqnarray}\label{1}
\mathscr{B}(h,k)&=&\tfrac{1}{2}\sum_{q=dr}\mu(d)\varphi(r){\sum_{r|mk\pm nh}{\!\!\!\!\!\!}}^{*}\ \ \frac{W_{\alpha,\beta}^{+}(\frac{\pi mn}{q})}{m^{1/2+\alpha}n^{1/2+\beta}}\nonumber\\
&&\quad+\tfrac{1}{2}\bigg(\frac{q}{\pi}\bigg)^{-\alpha-\beta}{\sum_{q=dr}\mu(d)\varphi(r)\sum_{r|mh\pm nk}{\!\!\!\!\!\!}}^{*}\ \ \frac{W_{\alpha,\beta}^{-}(\frac{\pi mn}{q})}{m^{1/2-\alpha}n^{1/2-\beta}}.
\end{eqnarray}
The diagonal terms $mk=nh$ and $mh=nk$ in the first and second sums on the right-hand side of \eqref{1}, respectively, give the main term visible in \eqref{2}. All the other terms in \eqref{1} contribute at most
\begin{displaymath}
E_2(h,k)=\sum_{mk\ne nh}\frac{(mk\pm nh,q)}{\sqrt{mn}}\bigg|W_{\alpha,\beta}^{\pm}\bigg(\frac{\pi mn}{q}\bigg)\bigg|.
\end{displaymath}
Using the estimate $W_{\alpha,\beta}^{\pm}(x)\ll(1+|x|)^{-1}$ one can easily show that
\begin{equation*}
\sum_{hk\leq y}\frac{E_2(h,k)}{\sqrt{hk}}\ll (yq)^{1/2+\varepsilon}.
\end{equation*}
This completes the proof of the lemma.
\end{proof}

\begin{lemma}
Suppose $y_2\leq y_1$, $|z|\ll(\log y_1)^{-1}$, and that $F_1$ and $F_2$ are smooth functions. Then
\begin{eqnarray*}
&&\sum_{n\leq y_2}\frac{d_k(n)}{n}\bigg(\frac{y_2}{n}\bigg)^{z}F_1\bigg(\frac{\log y_1/n}{\log y_1}\bigg)F_2\bigg(\frac{\log y_2/n}{\log y_2}\bigg)\\
&&\qquad=\frac{(\log y_2)^k}{(k-1)!}\int_{0}^{1}y_{2}^{zx}(1-x)^{k-1}F_1\bigg(1-\frac{(1-x)\log y_2}{\log y_1}\bigg)F_2(x)dx+O((\log y_2)^{k-1}).
\end{eqnarray*}
\end{lemma}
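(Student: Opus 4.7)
The plan is to rewrite the sum as a Riemann--Stieltjes integral against the partial-sum function $A(t)=\sum_{n\le t}d_k(n)/n$, invoke the classical divisor-type asymptotic for $A$, and reduce the main contribution to the claimed integral by an elementary change of variables.

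First I would recall the standard estimate $\sum_{n\le x}d_k(n)=xP_{k-1}(\log x)+O(x^{1-\delta_k})$, where $P_{k-1}$ is a polynomial of degree $k-1$ with leading coefficient $1/(k-1)!$. Partial summation then yields $A(t)=\frac{(\log t)^k}{k!}+R(t)$ with $R(t)\ll(\log t)^{k-1}$. Setting
$$w(t)=\left(\tfrac{y_2}{t}\right)^{z}F_1\!\left(\tfrac{\log y_1/t}{\log y_1}\right)F_2\!\left(\tfrac{\log y_2/t}{\log y_2}\right),$$
the hypotheses $|z|\ll(\log y_1)^{-1}$ and $y_2\le y_1$ guarantee $|w(t)|\ll 1$ uniformly on $[1,y_2]$, and the sum in question equals $\int_{1^-}^{y_2}w(t)\,dA(t)$.

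I would next split $dA(t)=(k-1)!^{-1}(\log t)^{k-1}t^{-1}\,dt+dR(t)$. The smooth piece produces
$$\frac{1}{(k-1)!}\int_1^{y_2}w(t)\,\frac{(\log t)^{k-1}}{t}\,dt,$$
to which I would apply the substitution $u=\log t/\log y_2$ followed by $x=1-u$. Using $(y_2/t)^z=y_2^{zx}$, $(\log t)^{k-1}=(1-x)^{k-1}(\log y_2)^{k-1}$, and $dt/t=(\log y_2)\,du$, this transforms directly into the claimed main term.

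It remains to bound the error $\int_{1^-}^{y_2}w(t)\,dR(t)$, which I would handle by integration by parts. The boundary contribution is $O((\log y_2)^{k-1})$ since $w$ is bounded and $|R|\ll(\log t)^{k-1}$. Differentiating $w$ produces three kinds of terms, of sizes $|z|/t$, $1/(t\log y_1)$, and $1/(t\log y_2)$; each is $\ll 1/(t\log y_2)$ under the hypotheses, so $|w'(t)|\ll 1/(t\log y_2)$ and the remaining integral is bounded by
$$\frac{1}{\log y_2}\int_1^{y_2}\frac{(\log t)^{k-1}}{t}\,dt\ll(\log y_2)^{k-1}.$$
The only delicate point I foresee is this uniform bound on $w'$ --- in particular, controlling the $z$-derivative $-z/t$ by $1/(t\log y_1)$ via the hypothesis $|z|\ll(\log y_1)^{-1}$. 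Once that bookkeeping is in place, the substitution producing the main term and the integration-by-parts bounding the error are both routine.
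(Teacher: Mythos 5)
Your proof is correct and self-contained. Note that the paper itself does not prove this lemma: it cites Lemma 4.4 of [BCY], so strictly speaking there is no in-paper proof to compare against.

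Your route via partial summation against $A(t)=\sum_{n\le t}d_k(n)/n$ is the natural one and everything checks out. The asymptotic $A(t)=\tfrac{(\log t)^k}{k!}+R(t)$ with $R(t)\ll(1+\log t)^{k-1}$ follows from the Dirichlet divisor estimate $\sum_{n\le x}d_k(n)=xP_{k-1}(\log x)+O(x^{1-\delta_k})$ by another partial summation (you should write $(1+\log t)^{k-1}$ rather than $(\log t)^{k-1}$, since $R(1)=1$ while $\log 1=0$, but this changes nothing downstream). The substitution $t=y_2^{1-x}$ in the smooth piece gives exactly the main term, and the integration-by-parts bound on $\int w\,dR$ is sound: since $y_2\le y_1$ and $|z|\ll(\log y_1)^{-1}$, each of the three logarithmic derivatives of $w$ is $\ll 1/(t\log y_2)$, and $R(1^-)=0$ kills the lower boundary term. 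You also correctly verify that both $F_1$- and $F_2$-arguments stay in $[0,1]$ for $n\le y_2\le y_1$, and that $(y_2/t)^{\Re z}\ll1$ under the hypothesis on $z$, so $w$ is uniformly bounded. This is essentially the argument one would expect in [BCY].
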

\begin{proof}
This is Lemma 4.4 in [\textbf{\ref{BCY}}].
\end{proof}

\begin{lemma}
Suppose $-1\leq\sigma\leq0$. We have
\begin{equation*}
\sum_{n\leq y}\frac{d_k(n)}{n}\bigg(\frac{y}{n}\bigg)^\sigma\ll (\log y)^{k-1}\min\big\{|\sigma|^{-1},\log y\big\}.
\end{equation*}
\end{lemma}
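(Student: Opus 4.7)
The plan is a routine Abel summation using the classical bound $D_k(t) := \sum_{n \leq t} d_k(n) \ll t (\log t)^{k-1}$. Taking $f(t) = t^{-1-\sigma}$, so that $f'(t) = -(1+\sigma) t^{-2-\sigma}$, partial summation yields
\begin{equation*}
\sum_{n \leq y} \frac{d_k(n)}{n}\bigg(\frac{y}{n}\bigg)^\sigma = y^\sigma \bigg(\frac{D_k(y)}{y^{1+\sigma}} + (1+\sigma)\int_1^y \frac{D_k(t)}{t^{2+\sigma}}\, dt\bigg) \ll (\log y)^{k-1} + y^\sigma \int_1^y t^{-1-\sigma}(\log t)^{k-1}\, dt.
\end{equation*}
The boundary contribution $(\log y)^{k-1}$ is already acceptable, so the task reduces to bounding the integral on the right.

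Setting $\delta := -\sigma \in [0,1]$ and substituting $u = \log t$ (and then $v = \delta u$ when $\delta > 0$) transforms the relevant expression into
\begin{equation*}
y^{-\delta}\int_0^{\log y} u^{k-1} e^{\delta u}\, du = y^{-\delta}\delta^{-k}\int_0^{\delta \log y} v^{k-1} e^v\, dv.
\end{equation*}
The next step is to split according to the size of $\delta \log y$. For $\delta \log y \geq 1$, a single integration by parts gives $\int_0^{\delta \log y} v^{k-1} e^v\, dv \leq (\delta \log y)^{k-1} y^\delta$, so the whole expression is $\ll (\log y)^{k-1}/\delta = (\log y)^{k-1}|\sigma|^{-1}$. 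For $\delta \log y < 1$, which also accommodates the degenerate case $\delta = 0$ (handled by evaluating the $u$-integral directly), bounding $e^v \leq e$ produces $\ll (\log y)^k$, while the factor $y^{-\delta}$ is $O(1)$.

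Finally, in the first regime $|\sigma|^{-1} \leq \log y$, so $\min\{|\sigma|^{-1}, \log y\} = |\sigma|^{-1}$; in the second $|\sigma|^{-1} > \log y$, so $\min\{|\sigma|^{-1}, \log y\} = \log y$. Thus the bound $(\log y)^{k-1}\min\{|\sigma|^{-1}, \log y\}$ holds in both cases. The argument is essentially mechanical, with no real obstacle; the only point worth attention is the case split in terms of $\delta \log y$, which is precisely what produces the $\min$ on the right-hand side.
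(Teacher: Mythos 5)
The paper does not prove this lemma; it merely cites Lemma~4.6 of Bui--Conrey--Young, so there is no internal argument to compare against. Your self-contained derivation is correct: the Abel summation with $D_k(t)\ll t(\log t)^{k-1}$ is sound, the boundary term and the $(1+\sigma)\le 1$ factor are handled properly, the substitutions $u=\log t$ and $v=\delta u$ are legitimate, and the case split on $\delta\log y\gtrless 1$ is exactly the mechanism that produces the $\min$ in the bound. In the regime $\delta\log y\ge 1$ the integration-by-parts estimate $\int_0^X v^{k-1}e^v\,dv\le X^{k-1}e^X$ (valid for $k\ge 1$ since the remainder term is nonnegative, or vanishes when $k=1$) gives $|\sigma|^{-1}(\log y)^{k-1}$, and in the regime $\delta\log y<1$ the crude bound $e^{\delta u}\le e$ together with $y^{-\delta}\le 1$ gives $(\log y)^k$; both match the stated $\min$. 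This is the natural elementary argument and, as far as one can tell, essentially the same route the cited reference takes.
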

\begin{proof}
This is Lemma 4.6 in [\textbf{\ref{BCY}}].
\end{proof}

\subsection{Mellin transform pairs}

Let $P(x)=\sum_{i\geq1}a_ix^i$ and $Q(x)=\sum_{j\geq1}b_jx^j$. We note the Mellin transform pairs
\begin{equation}\label{11}
P[h]=\sum_{i\geq1}\frac{a_ii!}{(\log y_1)^i}\frac{1}{2\pi i}\int_{(2)}\frac{y_{1}^{u}}{u^{i+1}}h^{-u}du
\end{equation}
and
\begin{eqnarray}\label{8}
Q[h]=\sum_{j\geq1}\frac{b_jj!}{(\log y_2)^j}\frac{1}{2\pi i}\int_{(2)}\frac{y_{2}^{u}}{u^{j+1}}h^{-u}du.
\end{eqnarray}

\section{Evaluating $I(\alpha)$}

\subsection{Reduction to a contour integral}

We recall that $I(\alpha)$ is defined by \eqref{5}. Writing out the definition of $M_2(\chi)$ we have
\begin{equation*}
I(\alpha)=\frac{1}{L}\sum_{hk\leq y_2}\frac{(\log*\mu)(h)\mu(k)Q[hk]}{\sqrt{hk}}{\sum_{\chi(\textrm{mod}\ \!q)}{\!\!\!\!\!}}^{+}\ L({\scriptstyle{\frac{1}{2}}}+\alpha,\chi)\overline{\chi}(h)\chi(k).
\end{equation*}
By Lemma 3.3, we can write $I(\alpha)=I'(\alpha)+O((y_2q)^{1/2+\varepsilon})$, where
\begin{equation*}
I'(\alpha)=\frac{\varphi^{+}(q)}{L}{\sum_{\substack{hk\leq y_2\\mk=h}}\!\!}^{*}\ \frac{(\log*\mu)(h)\mu(k)Q[hk]}{\sqrt{hk}m^{1/2+\alpha}}V\bigg(\frac{m}{q^{1+\varepsilon}}\bigg).
\end{equation*}
Using \eqref{8} and \eqref{9} we obtain
\begin{equation*}
I'(\alpha)=\frac{\varphi^{+}(q)}{L}\sum_{j\geq1}\frac{b_jj!}{(\log y_2)^j}\bigg(\frac{1}{2\pi i}\bigg)^2\int_{(2)}\int_{(2)}e^{s^2}q^{(1+\varepsilon)s}y_{2}^{u}{\sum_{mk=h_1h_2}\!\!\!\!\!}^{*}\ \ \frac{(\log h_1)\mu(h_2)\mu(k)}{(h_1h_2k)^{1/2+u}m^{1/2+\alpha+s}}\frac{ds}{s}\frac{du}{u^{j+1}}.
\end{equation*}
The sum in the integrand is
\begin{equation*}
-\frac{d}{d\gamma}{\sum_{mk=h_1h_2}\!\!\!\!\!}^{*}\ \ \frac{\mu(h_2)\mu(k)}{h_{1}^{1/2+u+\gamma}(h_2k)^{1/2+u}m^{1/2+\alpha+s}}\bigg|_{\gamma=0}.
\end{equation*}
Hence 
\begin{equation}\label{23}
I'(\alpha)=-\frac{\varphi^{+}(q)}{L}\sum_{j\geq1}\frac{b_jj!}{(\log y_2)^j}\frac{\partial}{\partial\gamma}I''(\alpha,\gamma)\bigg|_{\gamma=0},
\end{equation}
where
\begin{equation*}
I''(\alpha,\gamma)=\bigg(\frac{1}{2\pi i}\bigg)^2\int_{(2)}\int_{(2)}e^{s^2}q^{(1+\varepsilon)s}y_{2}^{u}{\sum_{mk=h_1h_2}\!\!\!\!\!}^{*}\ \ \frac{\mu(h_2)\mu(k)}{h_{1}^{1/2+u+\gamma}(h_2k)^{1/2+u}m^{1/2+\alpha+s}}\frac{ds}{s}\frac{du}{u^{j+1}}.
\end{equation*}
We note that here and throughout the paper, we take $\gamma,\gamma_1,\gamma_2\in\mathbb{C}$ and $\gamma,\gamma_1,\gamma_2\ll L^{-1}$. Some standard calculations give
\begin{equation}\label{13}
{\sum_{mk=h_1h_2}\!\!\!\!\!}^{*}\ \ \frac{\mu(h_2)\mu(k)}{h_{1}^{1/2+u+\gamma}(h_2k)^{1/2+u}m^{1/2+\alpha+s}}=A(\alpha,\gamma,u,s)\frac{\zeta(1+\alpha+\gamma+u+s)\zeta(1+2u)}{\zeta(1+\gamma+2u)\zeta(1+\alpha+u+s)},
\end{equation}
where $A(\alpha,\gamma,u,s)$ is an arithmetical factor given by some Euler product that is absolutely and uniformly convergent in some product of fixed half-planes containing the origin. We first move the $u$-contour to $\textrm{Re}(u)=\delta$, and then move the $s$-contour to $\textrm{Re}(s)=-(1-\varepsilon/2)\delta$, where $\delta>0$ is some fixed small constant such that the arithmetical factor converges absolutely. In doing so we only cross a simple pole at $s=0$. By bounding the integral by absolute values, the contribution along the new line is
\begin{equation*}
\ll q^{-(1+\varepsilon)(1-\varepsilon/2)\delta}y_{2}^{\delta}\ll (y_2q^{-1})^\delta.
\end{equation*} 
Thus
\begin{eqnarray*}
I''(\alpha,\gamma)=\frac{1}{2\pi i}\int_{(\delta)}y_{2}^{u}A(\alpha,\gamma,u,0)\frac{\zeta(1+\alpha+\gamma+u)\zeta(1+2u)}{\zeta(1+\gamma+2u)\zeta(1+\alpha+u)}\frac{du}{u^{j+1}}+O(q^{-\varepsilon}).
\end{eqnarray*}
Moving the contour to $\textrm{Re}(u)\asymp L^{-1}$ and bounding the integral trivially show that $I''(\alpha,\gamma)\ll L^{j}$. Hence by the Cauchy theorem we have
\begin{equation}\label{10}
\frac{\partial}{\partial\gamma}I''(\alpha,\gamma)\bigg|_{\gamma=0}=K_{1}(\alpha)+K_{2}(\alpha)+O(L^{j}),
\end{equation}
where
\begin{equation*}
K_{1}(\alpha)=\frac{1}{2\pi i}\int_{(L^{-1})}y_{2}^{u} A(\alpha,0,u,0)\frac{\zeta'(1+\alpha+u)}{\zeta(1+\alpha+u)}\frac{du}{u^{j+1}}
\end{equation*}
and
\begin{equation*}
K_{2}(\alpha)=-\frac{1}{2\pi i}\int_{(L^{-1})}y_{2}^{u} A(\alpha,0,u,0)\frac{\zeta'(1+2u)}{\zeta(1+2u)}\frac{du}{u^{j+1}}.
\end{equation*}

\subsection{Evaluation of $K_{1}(\alpha)$ and $K_{2}(\alpha)$}

\begin{lemma}
With $K_{11}(\alpha)$ and $K_{12}(\alpha)$ defined above we have
\begin{equation}\label{15}
K_{1}(\alpha)=-\frac{A(\alpha,0,0,0)(\log y_2)^{j+1}}{j!}\int_{0}^{1}y_{2}^{-\alpha(1-x)}x^{j}dx+O(L^{j})
\end{equation}
and
\begin{equation*}
K_{2}(\alpha)=\frac{A(\alpha,0,0,0)(\log y_2)^{j+1}}{2(j+1)!}+O(L^{j}).
\end{equation*}
\end{lemma}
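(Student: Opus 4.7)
The strategy is to exploit the fact that $\zeta'(1+s)/\zeta(1+s) = -1/s + h(s)$ for some function $h$ holomorphic in a neighborhood of $s=0$, together with the expansion $A(\alpha,0,u,0) = A(\alpha,0,0,0) + u\,B(\alpha,u)$ with $B$ bounded (valid since $A$ is holomorphic near $u=0$ by absolute convergence of its Euler product). Substituting these decompositions splits $K_1(\alpha)$ into a main piece
\[
-\frac{A(\alpha,0,0,0)}{2\pi i}\int_{(L^{-1})}\frac{y_2^u\,du}{(\alpha+u)\,u^{j+1}}
\]
plus three secondary pieces (one containing $h(\alpha+u)$, one containing $u\,B(\alpha,u)$, and their product).

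Each secondary piece I would bound by $O(L^{j})$ by shifting the contour to $\operatorname{Re}(u)=-c$ for some small fixed $c>0$. The only pole encountered is at $u=0$: in pieces containing $h(\alpha+u)$ the factor $1/(\alpha+u)$ has been replaced by a holomorphic function so no pole at $u=-\alpha$ arises, while in pieces containing $u\,B(\alpha,u)$ the order of the pole at $u=0$ drops from $j+1$ to $j$. In every such case both the residue and the shifted integral are $O(L^{j})$.

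For the main piece I would use the integral representation $1/(\alpha+u)=\int_0^\infty e^{-(\alpha+u)t}\,dt$ (valid on $\operatorname{Re}(u)>0$), interchange the order of integration, and apply the Perron-type identity
\[
\frac{1}{2\pi i}\int_{(L^{-1})}\frac{(y_2 e^{-t})^u\,du}{u^{j+1}}=\frac{(\log y_2-t)^{j}}{j!}
\]
for $t<\log y_2$ (and zero otherwise). The substitution $t=(1-x)\log y_2$ then produces exactly the formula \eqref{15}. The evaluation of $K_2(\alpha)$ is entirely parallel but simpler, because $\zeta'(1+2u)/\zeta(1+2u)$ has its only pole at $u=0$: the main piece reduces to the standard Mellin-Perron integral $\frac{A(\alpha,0,0,0)}{2}\cdot\frac{1}{2\pi i}\int_{(L^{-1})}y_2^u u^{-j-2}\,du=\frac{A(\alpha,0,0,0)(\log y_2)^{j+1}}{2(j+1)!}$, and the corresponding secondary terms are again $O(L^{j})$.

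The principal obstacle is uniformity in $\alpha$. Since $\alpha\ll L^{-1}$, the poles of the main integrand for $K_1$ at $u=0$ and $u=-\alpha$ lie within $O(L^{-1})$ of each other, and computing their residues separately would yield individually unbounded contributions of order $L^{j+1}\alpha^{-j-1}$ which cancel only when summed. The integral representation of $1/(\alpha+u)$ circumvents this by packaging both pole contributions into the single bounded integral $\int_0^1 y_2^{-\alpha(1-x)}x^{j}\,dx$, and so sidesteps the delicate cancellation altogether.
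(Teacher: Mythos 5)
Your approach is essentially the same as the paper's, with a different but equivalent integral representation at the final step, and with one technical slip in the contour-shifting step that you should fix.

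On the final step: the paper uses the identity
\[
\frac{1}{\alpha+u}=\int_{1/y_2}^{1}t^{\alpha+u-1}\,dt+\frac{y_2^{-\alpha-u}}{\alpha+u},
\]
which holds for \emph{all} complex $\alpha,u$, and then shows the contribution of the explicit tail term vanishes by enlarging the residue contour. Your representation $\frac{1}{\alpha+u}=\int_0^\infty e^{-(\alpha+u)t}\,dt$ is the same after the change of variables $t\mapsto e^{-t}$, but it requires $\operatorname{Re}(\alpha+u)>0$. Since $\alpha\ll L^{-1}$ may have negative real part as large as a constant times $L^{-1}$, you must first move the $u$-line slightly to the right (say to $\operatorname{Re}(u)=CL^{-1}$ with $C$ large enough) before inserting the representation; you should also note that the Perron identity you invoke converges absolutely only because $j\geq 1$, which is guaranteed since $Q(0)=0$.

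On the secondary pieces: your claim that one can shift to $\operatorname{Re}(u)=-c$ for a \emph{fixed} $c>0$ is not legitimate for the pieces containing $h(\alpha+u)=\frac{\zeta'}{\zeta}(1+\alpha+u)+\frac{1}{\alpha+u}$. That function has poles at $u=\rho-1-\alpha$ for every nontrivial zero $\rho$ of $\zeta$, and there is no fixed vertical strip to the left of $\operatorname{Re}(u)=0$ free of such poles. What is actually available is the classical zero-free region (a contour bending in like $\operatorname{Re}(u)=-c/\log(2+|\operatorname{Im}u|)$, truncated at a suitable height), which gives a saving of $O(L^{-B})$ rather than a power of $q$; this is exactly what the paper invokes with the phrase ``as in the proof of the prime number theorem.'' Once you replace your fixed-$c$ shift with this standard truncated-contour argument, the residue computations you describe do give $O(L^{j})$ for each secondary piece and your proof closes.

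The one genuinely cosmetic difference is ordering: the paper first captures the residues at $u=0$ and $u=-\alpha$ as a single small-circle integral and then Taylor-expands $\zeta'/\zeta(1+\alpha+u)\to -1/(\alpha+u)$, whereas you Laurent-expand and split off the main singular part before shifting. Both routes lead to the same circle integral $\oint y_2^u\,du/((\alpha+u)u^{j+1})$ and the same final answer.
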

\begin{proof}
We shall illustrate the proof for $K_{1}(\alpha)$. The case of $K_{2}(\alpha)$ can be treated similarly.

By bounding the integral with absolute value we have $K_{1}(\alpha)\ll L^{j+1}$. Denote by $K_{1}'(\alpha)$ the same integral as $K_{1}(\alpha)$ but with $A(\alpha,0,u,0)$ being replaced by $A(\alpha,0,0,0)$. Then we have $K_{1}(\alpha)=K_{1}'(\alpha)+O(L^j)$. As in the proof of the prime number theorem, $K_{1}'(\alpha)$ is captured by the residues at $u=0$ and $u=-\alpha$, with an error of size $O((\log y_2)^{-B})$ for arbitrarily large $B$. Also, we can express the sum of the residues as
\begin{equation*}
A(\alpha,0,0,0)\frac{1}{2\pi i}\oint y_{2}^{u}\frac{\zeta'(1+\alpha+u)}{\zeta(1+\alpha+u)}\frac{du}{u^{j+1}},
\end{equation*}
where the contour is a circle with radius $\asymp L^{-1}$ enclosing the origin. We note that this is trivially bounded by $O(L^{j+1})$. Hence taking the first terms in the Taylor series of the zeta-functions we obtain
\begin{equation*}
K_{1}(\alpha)=-A(\alpha,0,0,0)\frac{1}{2\pi i}\oint y_{2}^{u}\frac{du}{(\alpha+u)u^{j+1}}+O(L^{j}).
\end{equation*}
We use the identity
\begin{equation*}
\frac{1}{\alpha+u}=\int_{1/y_2}^{1}t^{\alpha+u-1}dt+\frac{y_{2}^{-\alpha-u}}{\alpha+u},
\end{equation*}
which is valid for all complex numbers $\alpha,u$. The contribution of the second term to the integral is
\begin{equation*}
-A(\alpha,0,0,0)y_{2}^{-\alpha}\frac{1}{2\pi i}\oint \frac{du}{(\alpha+u)u^{j+1}}.
\end{equation*}
This integral can be seen to vanish by taking the contour to be arbitrarily large. Thus
\begin{eqnarray*}
K_{1}(\alpha)&=&-A(\alpha,0,0,0)\int_{1/y_2}^{1}t^{\alpha}\frac{1}{2\pi i}\oint (y_2t)^{u}\frac{du}{u^{j+1}}\frac{dt}{t}+O(L^{j})\\
&=&-A(\alpha,0,0,0)\int_{1/y_2}^{1}t^{\alpha}\frac{(\log y_2t)^{j}}{j!}\frac{dt}{t}+O(L^{j}).
\end{eqnarray*}
We can write it in a compact form as in \eqref{15} after the substitution $t=y_{2}^{-(1-x)}$.
\end{proof}

\subsection{Deduction of Lemma 2.1}

By Lemma 4.1, \eqref{10} and \eqref{23} we have
\begin{eqnarray*}
I'(\alpha)=\varphi^{+}(q)A(\alpha,0,0,0)\bigg(\vartheta_2\int_{0}^{1}y_{2}^{-\alpha(1-x)}Q(x)dx-\tfrac{\vartheta_2}{2}Q_1(1)\bigg)+O(qL^{-1+\varepsilon}).
\end{eqnarray*}

It is now sufficient to verify that $A(0,0,0,0)=1$. Taking $\alpha=\gamma=0$ and $u=s$ in \eqref{13} we have
\begin{equation*}
A(0,0,s,s)={\sum_{mk=h_1h_2}\!\!\!\!\!}^{*}\ \ \frac{\mu(h_2)\mu(k)}{(h_{1}h_2km)^{1/2+s}}=1
\end{equation*}
for all $s$. This completes the proof of Lemma 2.1.

\section{Evaluating $J_1(\alpha,\beta)$}

\subsection{Reduction to a contour integral}

We recall that $J_1(\alpha,\beta)$ is defined by \eqref{6}. Writing out the definitions of $M_1(\chi)$ and $M_2(\chi)$ we have
\begin{equation*}
J_1(\alpha,\beta)=\frac{1}{L}\sum_{\substack{l\leq y_1\\hk\leq y_2}}\frac{\mu(l)P[l](\log*\mu)(h)\mu(k)Q[hk]}{\sqrt{lhk}}{\sum_{\chi(\textrm{mod}\ \!q)}{\!\!\!\!\!}}^{+}\ L({\scriptstyle{\frac{1}{2}}}+\alpha,\chi)L(\tfrac{1}{2}+\beta,\overline{\chi})\chi(lh)\overline{\chi}(k).
\end{equation*}
By Lemma 3.5, we can write $J_1(\alpha,\beta)=J_{1}^{+}(\alpha,\beta)+J_{1}^{-}(\alpha,\beta)+O((y_1y_2q)^{1/2+\varepsilon})$, where
\begin{equation*}
J_{1}^{+}(\alpha,\beta)=\frac{\varphi^{+}(q)}{L}{\sum_{\substack{l\leq y_1\\hk\leq y_2\\mlh=nk}}\!\!\!\!}^{*}\ \ \frac{\mu(l)P[l](\log*\mu)(h)\mu(k)Q[hk]}{\sqrt{lhk}m^{1/2+\alpha}n^{1/2+\beta}}W_{\alpha,\beta}^{+}\bigg(\frac{\pi mn}{q}\bigg)
\end{equation*}
and
\begin{equation*}
J_{1}^{-}(\alpha,\beta)=\frac{\varphi^{+}(q)}{L}\bigg(\frac{q}{\pi}\bigg)^{-\alpha-\beta}{\sum_{\substack{l\leq y_1\\hk\leq y_2\\mk=nlh}}\!\!\!\!}^{*}\ \ \frac{\mu(l)P[l](\log*\mu)(h)\mu(k)Q[hk]}{\sqrt{lhk}m^{1/2-\alpha}n^{1/2-\beta}}W_{\alpha,\beta}^{-}\bigg(\frac{\pi mn}{q}\bigg).
\end{equation*}
The treatments of $J_{1}^{+}(\alpha,\beta)$ and $J_{1}^{-}(\alpha,\beta)$ are similar. In fact, it is easy to show that
\begin{equation*}
J_{1}^{-}(\alpha,\beta)=\big(q^{-\alpha-\beta}+O(L^{-1})\big)J_{1}^{+}(-\beta,-\alpha).
\end{equation*}

Using \eqref{11}, \eqref{8} and \eqref{12} we obtain
\begin{eqnarray*}
J_{1}^{+}(\alpha,\beta)&=&\frac{\varphi^{+}(q)}{L}\sum_{i,j\geq1}\frac{a_ib_ji!j!}{(\log y_1)^i(\log y_2)^j}\bigg(\frac{1}{2\pi i}\bigg)^3\int_{(2)}\int_{(2)}\int_{(2)}G(s)g_{\alpha,\beta}^{+}(s)\bigg(\frac{q}{\pi}\bigg)^{s}y_{1}^{u}y_{2}^{v}\\
&&\qquad\qquad{\sum_{mlh=nk}\!\!\!\!}^{*}\ \ \frac{\mu(l)(\log*\mu)(h)\mu(k)}{l^{1/2+u}(hk)^{1/2+v}m^{1/2+\alpha+s}n^{1/2+\beta+s}}\frac{ds}{s}\frac{du}{u^{i+1}}\frac{dv}{v^{j+1}}.
\end{eqnarray*}
The sum in the integrand is
\begin{equation*}
-\frac{d}{d\gamma}{\sum_{mlh_1h_2=nk}\!\!\!\!\!\!\!}^{*}\ \ \ \ \frac{\mu(l)\mu(h_2)\mu(k)}{l^{1/2+u}h_{1}^{1/2+v+\gamma}(h_2k)^{1/2+v}m^{1/2+\alpha+s}n^{1/2+\beta+s}}\bigg|_{\gamma=0}.
\end{equation*}
So 
\begin{equation*}
J_{1}^{+}(\alpha,\beta)=-\frac{\varphi^{+}(q)}{L}\sum_{i,j\geq1}\frac{a_ib_ji!j!}{(\log y_1)^i(\log y_2)^j}\frac{\partial}{\partial\gamma}J_1'(\alpha,\beta,\gamma)\bigg|_{\gamma=0},
\end{equation*}
where
\begin{eqnarray*}
J_1'(\alpha,\beta,\gamma)&=&\bigg(\frac{1}{2\pi i}\bigg)^3\int_{(2)}\int_{(2)}\int_{(2)}G(s)g_{\alpha,\beta}^{+}(s)\bigg(\frac{q}{\pi}\bigg)^{s}y_{1}^{u}y_{2}^{v}\\
&&\qquad{\sum_{mlh_1h_2=nk}\!\!\!\!\!\!\!}^{*}\ \ \ \ \frac{\mu(l)\mu(h_2)\mu(k)}{l^{1/2+u}h_{1}^{1/2+v+\gamma}(h_2k)^{1/2+v}m^{1/2+\alpha+s}n^{1/2+\beta+s}}\frac{ds}{s}\frac{du}{u^{i+1}}\frac{dv}{v^{j+1}}.
\end{eqnarray*}
Some standard calculations give
\begin{eqnarray}\label{18}
&&{\sum_{mlh_1h_2=nk}\!\!\!\!\!\!\!}^{*}\ \ \ \ \frac{\mu(l)\mu(h_2)\mu(k)}{l^{1/2+u}h_{1}^{1/2+v+\gamma}(h_2k)^{1/2+v}m^{1/2+\alpha+s}n^{1/2+\beta+s}}=\\
&&\qquad B(\alpha,\beta,\gamma,u,v,s)\frac{\zeta(1+\alpha+\beta+2s)\zeta(1+u+v)\zeta(1+\beta+\gamma+v+s)\zeta(1+2v)}{\zeta(1+\alpha+v+s)\zeta(1+\beta+u+s)\zeta(1+\beta+v+s)\zeta(1+\gamma+2v)},\nonumber
\end{eqnarray}
where $B(\alpha,\beta,\gamma,u,v,s)$ is an arithmetical factor given by some Euler product that is absolutely and uniformly convergent in some product of fixed half-planes containing the origin. We first move the $u$-contour and $v$-contour to $\textrm{Re}(u)=\textrm{Re}(v)=\delta$, and then move the $s$-contour to $\textrm{Re}(s)=-(1-\varepsilon)\delta$, where $\delta,\varepsilon>0$ are some fixed small constants such that the arithmetical factor converges absolutely and $\vartheta_1+\vartheta_2<1-\varepsilon$. In doing so we only cross a simple pole at $s=0$. Note that the simple pole at $s=-(\alpha+\beta)/2$ of $\zeta(1+\alpha+\beta+2s)$ has been cancelled out by the factor $G(s)$. By bounding the integral by absolute values, the contribution along the new line is
\begin{equation*}
\ll q^{-(1-\varepsilon)\delta}(y_1y_{2})^{\delta}\ll q^{(-1+\varepsilon+\vartheta_1+\vartheta_2)\delta}.
\end{equation*}
Thus
\begin{eqnarray*}
J_1'(\alpha,\beta,\gamma)&=&\bigg(\frac{1}{2\pi i}\bigg)^2\int_{(\delta)}\int_{(\delta)}y_{1}^{u}y_{2}^{v}B(\alpha,\beta,\gamma,u,v,0)\\
&&\qquad\frac{\zeta(1+\alpha+\beta)\zeta(1+u+v)\zeta(1+\beta+\gamma+v)\zeta(1+2v)}{\zeta(1+\alpha+v)\zeta(1+\beta+u)\zeta(1+\beta+v)\zeta(1+\gamma+2v)}\frac{du}{u^{i+1}}\frac{dv}{v^{j+1}}+O(q^{-\varepsilon}).
\end{eqnarray*}

We now take the derivative with respect to $\gamma$ and set $\gamma=0$. We first note that by moving the contours to $\textrm{Re}(w_1)=\textrm{Re}(w_2)\asymp L^{-1}$ and bounding the integral with absolute values, we get $J_1'(\alpha,\beta,\gamma)\ll L^{i+j}$. Hence by the Cauchy theorem
\begin{equation}\label{105}
\frac{\partial}{\partial \gamma}J_1'(\alpha,\beta,\gamma)\bigg|_{\gamma=0}=\zeta(1+\alpha+\beta)\bigg(L_{11}(\alpha,\beta)+L_{12}(\alpha,\beta)\bigg)+O(L^{i+j}),
\end{equation}
where
\begin{eqnarray*}
L_{11}(\alpha,\beta)&=&\bigg(\frac{1}{2\pi i}\bigg)^2\int_{(L^{-1})}\int_{(L^{-1})}y_{1}^{u}y_{2}^{v}B(\alpha,\beta,0,u,v,0)\zeta(1+u+v)\\
&&\qquad\qquad\frac{1}{\zeta(1+\beta+u)}\frac{\zeta'(1+\beta+v)}{\zeta(1+\alpha+v)\zeta(1+\beta+v)}\frac{du}{du^{i+1}}\frac{dv}{dv^{j+1}}
\end{eqnarray*}
and
\begin{eqnarray*}
L_{12}(\alpha,\beta)&=&-\bigg(\frac{1}{2\pi i}\bigg)^2\int_{(L^{-1})}\int_{(L^{-1})}y_{1}^{u}y_{2}^{v}B(\alpha,\beta,0,u,v,0)\zeta(1+u+v)\\
&&\qquad\qquad\frac{1}{\zeta(1+\beta+u)}\frac{\zeta'(1+2v)}{\zeta(1+\alpha+v)\zeta(1+2v)}\frac{du}{du^{i+1}}\frac{dv}{dv^{j+1}}.
\end{eqnarray*}

\subsection{Evaluation of $L_{11}(\alpha,\beta)$ and $L_{12}(\alpha,\beta)$}

\begin{lemma}
With $L_{11}(\alpha,\beta)$ and $L_{12}(\alpha,\beta)$ defined above we have
\begin{eqnarray}\label{14}
&&\!\!\!\!\!\!\!\!\!\!\!L_{11}(\alpha,\beta)=-(\log y_1)^{i-1}(\log y_2)^{j+1}\int_{0}^{1}\int_{0}^{x}y_{2}^{-\beta u}\bigg(\frac{\beta(\log y_1) (1-\frac{\vartheta_2(1-x)}{\vartheta_1})^i}{i!}+\frac{(1-\frac{\vartheta_2(1-x)}{\vartheta_1})^{i-1}}{(i-1)!}\bigg)\nonumber\\
&&\qquad\qquad\qquad \bigg(\frac{\alpha(\log y_2)(x-u)^j}{j!}+\frac{(x-u)^{j-1}}{(j-1)!}\bigg)dudx+O(L^{i+j-1})+O(L^{i-1+\varepsilon})
\end{eqnarray}
and
\begin{eqnarray*}
L_{12}(\alpha,\beta)&=&\frac{(\log y_1)^{i-1}(\log y_2)^{j+1}}{2}\int_{0}^{1}\bigg(\frac{\beta(\log y_1) (1-\frac{\vartheta_2(1-x)}{\vartheta_1})^i}{i!}+\frac{(1-\frac{\vartheta_2(1-x)}{\vartheta_1})^{i-1}}{(i-1)!}\bigg)\\
&&\qquad\qquad \bigg(\frac{\alpha(\log y_2)x^{j+1}}{(j+1)!}+\frac{x^j}{j!}\bigg)dx+O(L^{i+j-1})+O(L^{i-1+\varepsilon}).\nonumber
\end{eqnarray*}
\end{lemma}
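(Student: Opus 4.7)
My plan is to adapt the strategy behind the proof of Lemma 4.1 to the double contour integrals defining $L_{11}$ and $L_{12}$. The argument proceeds in four steps: replace the arithmetical factor $B(\alpha,\beta,0,u,v,0)$ by its value at the origin; substitute Laurent expansions for each zeta-factor and retain only the principal parts; evaluate the resulting double residue; and rewrite the residues as the iterated integrals appearing in the statement, using the integral identity from Lemma 4.1.

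First I verify $B(0,0,0,0,0,0)=1$ by setting $\alpha=\beta=\gamma=0$ and $u=v=s$ in \eqref{18}, which collapses the Euler product to the trivial identity; continuity of $B$ near the origin then yields $B(\alpha,\beta,0,u,v,0)=1+O(L^{-1}+|u|+|v|)$ uniformly on the contours, and trivial bounds on the remaining integrand absorb the deviation into the stated errors. Substituting the principal parts $\zeta(1+w)\sim w^{-1}$, $1/\zeta(1+w)\sim w$ and $\zeta'(1+w)/\zeta(1+w)\sim -w^{-1}$ then reduces the integrand of $L_{11}$ (up to the factor $B(\alpha,\beta,0,0,0,0)\,y_1^u y_2^v/(u^{i+1}v^{j+1})$) to a rational function of $u,v$ whose denominators involve $u+v$ and $\beta+v$; an analogous reduction with $2v$ in place of $\beta+v$ handles $L_{12}$. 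The remainders from these expansions contribute the errors $O(L^{i+j-1})$ and $O(L^{i-1+\varepsilon})$ stated in the lemma.

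The main analytical work is the evaluation of the double residue. Mimicking the treatment of $K_1(\alpha)$ in Lemma 4.1, I view each variable as a small contour integral around the origin, capturing the relevant nearby poles of the rational integrand. The resulting expression contains denominators of the form $1/(a+w)$, each of which I convert via
\begin{equation*}
\frac{1}{a+w}=\int_{1/y}^{1}t^{a+w-1}dt+\frac{y^{-a-w}}{a+w},
\end{equation*}
the second term vanishing after the contour is enlarged. The substitutions $t=y_1^{-(1-x)}$ and $t=y_2^{-(x-u)}$ convert the two $t$-integrals into the nested integration $0\leq u\leq x\leq 1$ visible in \eqref{14}, while the linear-in-shift factors $\alpha\log y_2$ and $\beta\log y_1$ arise from Taylor expanding $y_2^{\alpha\cdot}$ and $y_1^{\beta\cdot}$ to first order, which is exactly the order needed to match the truncated Laurent expansions. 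For $L_{12}$, the factor $\zeta'(1+2v)/\zeta(1+2v)$ involves only the single variable $2v$, so only one $t$-integral is produced and the final $x$ ranges over $[0,1]$ alone, explaining the single integral in the formula.

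The main obstacle I expect is the careful bookkeeping in the double residue: the coupled factor $\zeta(1+u+v)$ links the two variables and introduces a pole along $u+v=0$ which must be handled consistently with the separate residues near $u=0$ and $v=0$. Keeping the contour at $\textrm{Re}(u)=\textrm{Re}(v)=L^{-1}$ so that $u+v=0$ lies to the left of both contours allows this coupled contribution to be picked up systematically, and the residue then collapses into the stated compact formula. A secondary subtlety is matching constants so that $\alpha$ and $\beta$ appear exactly as $\alpha\log y_2$ and $\beta\log y_1$ in the first-order corrections; this is controlled by isolating $\zeta(1+\alpha+\beta)$ as an explicit prefactor, as already done in \eqref{105}.
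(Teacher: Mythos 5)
Your four-step outline starts the same way the paper does (confirm $B(0,0,0,0,0,0)=1$, replace $B$ by its value at the origin, pass to principal parts of the zeta factors), but the core of your argument — ``evaluate the resulting double residue'' by treating $u$ and $v$ simultaneously as small loops around the origin, à la Lemma 4.1 — is exactly the step the paper explicitly says cannot work. The obstruction is the factor $\zeta(1+u+v)$: its singular locus $u+v=0$ is not an isolated point in $\mathbb{C}^2$ but a whole hyperplane, so there is no well-defined ``residue'' to pick up by shrinking both contours. Saying that ``$u+v=0$ lies to the left of both contours'' does not give you a mechanism for extracting its contribution, and if you try to do the $u$-integral for fixed $v$ by moving $\mathrm{Re}(u)$ leftward, you collect a residue at $u=-v$ whose dependence on $v$ does not resemble the clean one-variable residues of Lemma 4.1. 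Your proposal never explains how the nested $\int_0^1\int_0^x\,du\,dx$ structure is supposed to emerge from such a computation, and I don't see a way to make it come out.

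The paper's actual route around this obstacle is the step you are missing: \emph{separate the variables first}, by writing $\zeta(1+u+v)=\sum_n n^{-1-u-v}$ and interchanging sum and integral, so that $L_{11}'(\alpha,\beta)=\sum_{n\le y_2}\tfrac{1}{n}N_{11}N_{12}$ with $N_{11}$ and $N_{12}$ genuine \emph{single-variable} contour integrals (the truncation to $n\le y_2$ comes by moving the $v$-contour far to the right when $n>y_2$). Only then does the Lemma 4.1 machinery apply, to each of $N_{11}$, $N_{12}$ individually. Even there, $N_{12}$ needs a more delicate zero-free-region argument (the $\Gamma_1,\Gamma_2,\Gamma_3$ contour of Perron type) because a trivial $O(1)$ error would not suffice after summing over $n$. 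Finally the sum over $n$ is converted into the nested integrals of the statement via Lemmas 3.6 and 3.7, which do not appear anywhere in your proposal. So the proposal is not a correct proof as written: the variable-separation step and the Lemma 3.6/3.7 closing step are both essential and both absent, and the ``double residue'' mechanism you substitute for them does not go through.
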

\begin{proof}
We shall illustrate the proof for $L_{11}(\alpha,\beta)$. The case of $L_{12}(\alpha,\beta)$ can be treated similarly. 

This is more intricate than the previous section. We cannot move the contours into the critical strip due to the pole of $\zeta(1+u+v)$ at $u=-v$. We will need to separate the variables $u$ and $v$ first.

Note that by bounding the integral with absolute values, we get $L_{11}(\alpha,\beta)\ll L^{i+j}$. We denote by $L_{11}'(\alpha,\beta)$ the same integral as $L_{11}(\alpha,\beta)$ but with $B(\alpha,\beta,0,u,v,0)$ being replaced by $B(\alpha,\beta,0,0,0,0)$. Then we have $L_{11}(\alpha,\beta)=L_{11}'(\alpha,\beta)+O(L^{i+j-1})$. We will later check that $B(0,0,0,0,0,0)=1$ (see the end of the section), a result we will use freely from now on.

Changing the orders of summation and integration we get
\begin{equation}\label{16}
L_{11}'(\alpha,\beta)=\sum_{n\leq y_2}\frac{1}{n}N_{11}N_{12},
\end{equation}
where
\begin{equation*}
N_{11}=\frac{1}{2\pi i}\int_{(L^{-1})}\bigg(\frac{y_1}{n}\bigg)^u\frac{1}{\zeta(1+\beta+u)}\frac{du}{u^{i+1}}
\end{equation*}
and
\begin{eqnarray*}
N_{12}=\frac{1}{2\pi i}\int_{(L^{-1})}\bigg(\frac{y_2}{n}\bigg)^v\frac{\zeta'(1+\beta+v)}{\zeta(1+\alpha+v)\zeta(1+\beta+v)}\frac{dv}{v^{j+1}}.
\end{eqnarray*}
We have restricted the summation in \eqref{16} to $n\leq y_2$ by moving the $v$-contour far to the right otherwise.

We first work on $N_{11}$. We note that since $n\leq y_2$, we have $\log y_1/n\geq(\vartheta_1-\vartheta_2)L$. Hence as in Lemma 4.1, the prime number theorem shows that $N_{11}$ is captured by the residue at $u=0$, with an error of size $O(L^{-B})$ for arbitrarily large $B$. Taking the first term in the Taylor series of the zeta-function and using the trivial bound lead to 
\begin{equation*}
N_{11}=\frac{1}{2\pi i}\oint\bigg(\frac{y_1}{n}\bigg)^{u}(\beta+u)\frac{du}{u^{i+1}}+O(L^{i-2}).
\end{equation*}
Thus
\begin{equation*}
N_{11}=\bigg(\frac{\beta(\log y_1/n)^i}{i!}+\frac{(\log y_1/n)^{i-1}}{(i-1)!}\bigg)+O(L^{i-2}).
\end{equation*}

We next evaluate $N_{12}$. This is more subtle since an error term of size $O(1)$ is not sufficient for our purpose. Let $X=o(q)$ be a large parameter which we will specify later. We define the line segments
\begin{eqnarray*}
&&\Gamma_1=\{w=\delta_1+it:|t|\geq X\},\\
&&\Gamma_2=\{w=\sigma\pm iX:\tfrac{-c}{\log X}\leq\sigma\leq\delta_1\}
\end{eqnarray*}
and
\begin{equation*}
\Gamma_3=\{w=\tfrac{-c}{\log X}+it:|t|\leq X\},
\end{equation*}
where $\delta_1\asymp L^{-1}$, and $c$ is some fixed positive constant such that $\zeta(1+\alpha+v)\zeta(1+\beta+v)$ has no zeros, and $1/\zeta(\sigma+it)\ll\log(2+|t|)$ in the region on the right hand side of the contour determined by $\bigcup \Gamma_i$ (see Theorem 3.11 [\textbf{\ref{T}}]). By Cauchy's theorem we have
\begin{equation*}
N_{12}=\textrm{Res}_{v=0}+\textrm{Res}_{v=-\beta}+R_1+R_2+R_3,
\end{equation*}
where $R_1$, $R_2$ and $R_3$ are the contributions along $\Gamma_1$, $\Gamma_2$ and $\Gamma_3$, respectively. We have
\begin{eqnarray*}
R_1\ll (\log X)^2/X^j\ll X^{-j+\varepsilon},\quad R_2\ll (\log X)^2/X^{j+1}\ll X^{-j-1+\varepsilon}
\end{eqnarray*}
and
\begin{equation*}
R_3\ll (\log X)^{j+2}\bigg(\frac{y_2}{n}\bigg)^{-c/\log X}.
\end{equation*}
We choose $X\asymp L$ and obtain an error of size $O(L^{-j+\varepsilon})+O((\tfrac{y_2}{n})^{-\nu}L^\varepsilon)$, for some $\nu\asymp(\log\log q)^{-1}$.

Now the sum of the residues can be written as 
\begin{equation*}
\frac{1}{2\pi i}\oint\bigg(\frac{y_2}{n}\bigg)^{v}\frac{\zeta'(1+\beta+v)}{\zeta(1+\alpha+v)\zeta(1+\beta+v)}\frac{dv}{v^{j+1}},
\end{equation*}
where the contour is a circle with radius $\asymp L^{-1}$ enclosing the origin. This integral is trivially bounded by $O(L^{j})$. Hence by taking the first terms in the Taylor series of the zeta-functions we get
\begin{equation*}
N_{12}=-\frac{1}{2\pi i}\oint\bigg(\frac{y_2}{n}\bigg)^{v}\frac{(\alpha+v)}{(\beta+v)}\frac{dv}{v^{j+1}}+O(L^{j-1})+O\bigg(\bigg(\frac{y_2}{n}\bigg)^{-\nu}L^\varepsilon\bigg).
\end{equation*}
We use the identity
\begin{equation*}
\frac{1}{\beta+v}=\int_{n/y_2}^{1}t^{\beta+v-1}dt+\frac{(y_2/n)^{-\beta-v}}{\beta+v}.
\end{equation*}
The contribution of the second term is
\begin{equation*}
-\bigg(\frac{y_2}{n}\bigg)^{-\beta}\frac{1}{2\pi i}\oint\frac{(\alpha+v)}{(\beta+v)}\frac{dv}{v^{j+1}}.
\end{equation*}
For $j\geq1$, this can be seen to vanish by taking the contour to be arbitrarily large. Thus
\begin{eqnarray*}
N_{12}&=&-\int_{n/y_2}^{1}t^{\beta}\frac{1}{2\pi i}\oint\bigg(\frac{y_2t}{n}\bigg)^{v}(\alpha+v)\frac{dv}{v^{j+1}}\frac{dt}{t}+O(L^{j-1})+O\bigg(\bigg(\frac{y_2}{n}\bigg)^{-\nu}L^\varepsilon\bigg)\\
&=&-\int_{n/y_2}^{1}t^{\beta}\bigg(\frac{\alpha(\log y_2t/n)^j}{j!}+\frac{(\log y_2t/n)^{j-1}}{{(j-1)}!}\bigg)\frac{dt}{t}+O(L^{j-1})+O\bigg(\bigg(\frac{y_2}{n}\bigg)^{-\nu}L^\varepsilon\bigg)
\end{eqnarray*}
Changing the variable $t=(y_2/n)^{-(1-u)}$ we obtain
\begin{eqnarray*}
N_{12}&=&-\bigg(\log \frac{y_2}{n}\bigg)\int_{0}^{1}\bigg(\frac{y_2}{n}\bigg)^{-\beta(1-u)}\bigg(\frac{\alpha(u\log y_2/n)^{j}}{j!}+\frac{(u\log y_2/n)^{j-1}}{(j-1)!}\bigg)du\\
&&\qquad\qquad+O(L^{j-1})+O\bigg(\bigg(\frac{y_2}{n}\bigg)^{-\nu}L^\varepsilon\bigg).
\end{eqnarray*}

Collecting the evaluations of $N_{11}$ and $N_{12}$ we get
\begin{eqnarray*}
N_{11}N_{12}&=&-\bigg(\log \frac{y_2}{n}\bigg)\bigg(\frac{\beta(\log y_1/n)^i}{i!}+\frac{(\log y_1/n)^{i-1}}{(i-1)!}\bigg)\\
&&\qquad\int_{0}^{1}\bigg(\frac{y_2}{n}\bigg)^{-\beta(1-u)}\bigg(\frac{\alpha(u\log y_2/n)^{j}}{j!}+\frac{(u\log y_2/n)^{j-1}}{(j-1)!}\bigg)du\\
&&\qquad\qquad\qquad+O(L^{i+j-2})+O\bigg(\bigg(\frac{y_2}{n}\bigg)^{-\nu}L^{i-1+\varepsilon}\bigg).
\end{eqnarray*}
By Lemma 3.7, the contribution of the $O$-terms to $L_{11}'(\alpha,\beta)$ in \eqref{16} is
\begin{equation*}
\ll O(L^{i+j-1})+O(L^{i-1+\varepsilon}).
\end{equation*}
For the main term, we use Lemma 3.6. Simplifying we obtain \eqref{14}.
\end{proof}

\subsection{Simplification}

We collect the evaluations of $L_{11}(\alpha,\beta)$ and $L_{12}(\alpha,\beta)$ to obtain
\begin{equation*}
J_{1}^{+}(\alpha,\beta)=\frac{\vartheta_2\varphi^{+}(q)\zeta(1+\alpha+\beta)}{\vartheta_1 L}J_{1}''(\alpha,\beta)+O(qL^{-1+\varepsilon}),
\end{equation*}
where $J_{1}''(\alpha,\beta)$ is equal to
\begin{eqnarray*}
&&\!\!\!\!\!\!\!\!\!\!\!\!\int_{0}^{1}\int_{0}^{x}y_{2}^{-\beta u}\bigg(\beta(\log y_1)P\big(1-\tfrac{\vartheta_2(1-x)}{\vartheta_1}\big)+P'\big(1-\tfrac{\vartheta_2(1-x)}{\vartheta_1}\big)\bigg)\bigg(\alpha(\log y_2)Q(x-u)+Q'(x-u)\bigg)dudx\\
&&\!\!\!\!\!\!\!\!\!\!\!\!\qquad-\tfrac{1}{2}\int_{0}^{1}\bigg(\beta(\log y_1)P\big(1-\tfrac{\vartheta_2(1-x)}{\vartheta_1}\big)+P'\big(1-\tfrac{\vartheta_2(1-x)}{\vartheta_1}\big)\bigg)\bigg(\alpha(\log y_2)Q_1(x)+Q(x)\bigg)dx.
\end{eqnarray*}
We write this in a more compact form as
\begin{eqnarray*}
J_{1}^{+}(\alpha,\beta)&=&\frac{\varphi^{+}(q)\zeta(1+\alpha+\beta)}{L}\frac{d^2}{dadb}\bigg\{\tfrac{\vartheta_2}{\vartheta_1}\int_{0}^{1}\int_{0}^{x}y_{1}^{\beta a}y_{2}^{\alpha b-\beta u}P\big(1-\tfrac{\vartheta_2(1-x)}{\vartheta_1}+a\big)Q(x-u+b)dudx\\
&&\qquad-\tfrac{\vartheta_2}{2\vartheta_1}\int_{0}^{1}y_{1}^{\beta a}y_{2}^{\alpha b}P\big(1-\tfrac{\vartheta_2(1-x)}{\vartheta_1}+a\big)Q_1(x+b)dudx\bigg\}\bigg|_{a=b=0}+O(qL^{-1+\varepsilon}).
\end{eqnarray*}

Next we combine $J_{1}^{+}(\alpha,\beta)$ and $J_{1}^{-}(\alpha,\beta)$. We recall that essentially $J_{1}^{-}(\alpha,\beta)=\big(q^{-\alpha-\beta}+O(L^{-1})\big)J_{1}^{+}(-\beta,-\alpha)$. Writing
\begin{equation*}
U(\alpha,\beta;u)=\frac{y_{1}^{\beta a}y_{2}^{\alpha b-\beta u}-q^{-\alpha-\beta}y_{1}^{-\alpha a}y_{2}^{-\beta b+\alpha u}}{\alpha+\beta}.
\end{equation*}
Using the integral formula
\begin{equation}\label{24}
\frac{1-z^{-\alpha-\beta}}{\alpha+\beta}=(\log z)\int_{0}^{1}z^{-(\alpha+\beta)t}dt,
\end{equation}
we have
\begin{eqnarray*}
U(\alpha,\beta;u)=Ly_{1}^{\beta a}y_{2}^{\alpha b-\beta u}\big(1+\vartheta_1 a+\vartheta_2(b-u)\big)\int_{0}^{1}(qy_{1}^{a}y_{2}^{b-u})^{-(\alpha+\beta)t}dt.
\end{eqnarray*}
Simplifying we obtain \eqref{19}.

Finally we need to verify that $B(0,0,0,0,0,0)=1$. Letting $\alpha=\beta=\gamma=0$ and $u=v=s$ in \eqref{18} we have
\begin{equation*}
B(0,0,0,s,s,s)={\sum_{mlh_1h_2=nk}\!\!\!\!\!\!\!}^{*}\ \ \ \ \frac{\mu(l)\mu(h_2)\mu(k)}{(mnlh_1h_2k)^{1/2+s}}=1,
\end{equation*}
for all $s$. This completes the proof of Lemma 2.2.

\section{Evaluating $J_2(\alpha,\beta)$}

\subsection{Reduction to a contour integral}

We recall that $J_2(\alpha,\beta)$ is defined by \eqref{7}. Writing out the definition of $M_2(\chi)$ we have
\begin{eqnarray*}
J_2(\alpha,\beta)&=&\frac{1}{L^2}\sum_{h_1k_1,h_2k_2\leq y_2}\frac{(\log*\mu)(h_1)\mu(k_1)Q[h_1k_1](\log*\mu)(h_2)\mu(k_2)Q[h_2k_2]}{\sqrt{h_1k_1h_2k_2}}\\
&&\qquad\qquad\qquad{\sum_{\chi(\textrm{mod}\ \!q)}{\!\!\!\!\!}}^{+}\ L({\scriptstyle{\frac{1}{2}}}+\alpha,\chi)L(\tfrac{1}{2}+\beta,\overline{\chi})\chi(h_2k_1)\overline{\chi}(h_1k_2).
\end{eqnarray*}
By Lemma 3.5, we can write $J_2(\alpha,\beta)=J_{2}^{+}(\alpha,\beta)+J_{2}^{-}(\alpha,\beta)+O((y_{2}^{2}q)^{1/2+\varepsilon})$, where
\begin{equation*}
J_{2}^{+}(\alpha,\beta)=\frac{\varphi^{+}(q)}{L^2}{\sum_{\substack{h_1k_1,h_2k_2\leq y_2\\mh_2k_1=nh_1k_2}}\!\!\!\!\!\!\!\!\!\!}^{*}\ \ \ \  \frac{(\log*\mu)(h_1)\mu(k_1)Q[h_1k_1](\log*\mu)(h_2)\mu(k_2)Q[h_2k_2]}{\sqrt{h_1k_1h_2k_2}m^{1/2+\alpha}n^{1/2+\beta}}W_{\alpha,\beta}^{+}\bigg(\frac{\pi mn}{q}\bigg)
\end{equation*}
and $J_{2}^{-}(\alpha,\beta)$ is equal to
\begin{equation*}
\frac{\varphi^{+}(q)}{L^2}\bigg(\frac{q}{\pi}\bigg)^{-\alpha-\beta}{\sum_{\substack{h_1k_1,h_2k_2\leq y_2\\mh_1k_2=nh_2k_1}}\!\!\!\!\!\!\!\!\!\!}^{*}\ \ \ \  \frac{(\log*\mu)(h_1)\mu(k_1)Q[h_1k_1](\log*\mu)(h_2)\mu(k_2)Q[h_2k_2]}{\sqrt{h_1k_1h_2k_2}m^{1/2-\alpha}n^{1/2-\beta}}W_{\alpha,\beta}^{-}\bigg(\frac{\pi mn}{q}\bigg).
\end{equation*}
Using \eqref{8} and \eqref{12} we obtain
\begin{eqnarray*}
J_{2}^{+}(\alpha,\beta)&=&\frac{\varphi^{+}(q)}{L^2}\sum_{i,j\geq1}\frac{b_ib_ji!j!}{(\log y_2)^{i+j}}\bigg(\frac{1}{2\pi i}\bigg)^3\int_{(2)}\int_{(2)}\int_{(2)}G(s)g_{\alpha,\beta}^{+}(s)\bigg(\frac{q}{\pi}\bigg)^{s}y_{2}^{u+v}\\
&&\qquad{\sum_{mh_2k_1=nh_1k_2}\!\!\!\!\!\!\!\!\!\!}^{*}\ \ \ \ \ \frac{(\log*\mu)(h_1)\mu(k_1)(\log*\mu)(h_2)\mu(k_2)}{(h_1k_1)^{1/2+u}(h_2k_2)^{1/2+v}m^{1/2+\alpha+s}n^{1/2+\beta+s}}\frac{ds}{s}\frac{du}{u^{i+1}}\frac{dv}{v^{j+1}}.
\end{eqnarray*}
The sum in the integrand is
\begin{equation*}
\frac{d^2}{d\gamma_1d\gamma_2}{\sum_{ml_2h_2k_1=nl_1h_1k_2}\!\!\!\!\!\!\!\!\!\!\!\!\!}^{*}\ \ \ \ \ \ \frac{\mu(h_1)\mu(h_2)\mu(k_1)\mu(k_2)}{l_{1}^{1/2+u+\gamma_1}l_{2}^{1/2+v+\gamma_2}(h_1k_1)^{1/2+u}(h_2k_2)^{1/2+v}m^{1/2+\alpha+s}n^{1/2+\beta+s}}\bigg|_{\gamma_1=\gamma_2=0}.
\end{equation*}
As in the previous sections, up to an arithmetical factor $C(\alpha,\beta,\gamma_1,\gamma_2,u,v,s)$, the sum in the integral is
\begin{eqnarray*}
&&\frac{\zeta(1+\alpha+\beta+2s)\zeta(1+\alpha+\gamma_1+u+s)\zeta(1+\beta+\gamma_2+v+s)\zeta(1+\gamma_1+\gamma_2+u+v)}{\zeta(1+\alpha+u+s)\zeta(1+\alpha+v+s)\zeta(1+\gamma_1+u+v)\zeta(1+\gamma_2+u+v)}\\
&&\qquad\qquad \times\frac{\zeta(1+2u)\zeta(1+2v)\zeta^2(1+u+v)}{\zeta(1+\beta+u+s)\zeta(1+\beta+v+s)\zeta(1+\gamma_1+2u)\zeta(1+\gamma_2+2v)}.
\end{eqnarray*}
Here $C(\alpha,\beta,\gamma_1,\gamma_2,u,v,s)$ is an arithmetical factor given by some Euler product that is absolutely and uniformly convergent in some product of fixed half-planes containing the origin. Again we first move the $u$-contour and $v$-contour to $\textrm{Re}(u)=\textrm{Re}(v)=\delta$, and then move the $s$-contour to $\textrm{Re}(s)=-(1-\varepsilon)\delta$, where $\delta,\varepsilon>0$ are some fixed small constants such that the arithmetical factor converges absolutely and $2\vartheta_2<1-\varepsilon$. In doing so we only cross a simple pole at $s=0$ and the contribution along the new line is $O(q^{-\varepsilon})$. We denote
\begin{eqnarray*}
J_2'(\alpha,\beta,\gamma_1,\gamma_2)=\bigg(\frac{1}{2\pi i}\bigg)^2\int_{(\delta)}\int_{(\delta)}y_{2}^{u+v}C(\alpha,\beta,\gamma_1,\gamma_2,u,v,0)\frac{\zeta(1+\gamma_1+\gamma_2+u+v)\zeta^2(1+u+v)}{\zeta(1+\gamma_1+u+v)\zeta(1+\gamma_2+u+v)}
\end{eqnarray*}
\begin{eqnarray*}
\qquad\frac{\zeta(1+\alpha+\beta)\zeta(1+\alpha+\gamma_1+u)\zeta(1+\beta+\gamma_2+v)\zeta(1+2u)\zeta(1+2v)}{\zeta(1+\alpha+u)\zeta(1+\alpha+v)\zeta(1+\beta+u)\zeta(1+\beta+v)\zeta(1+\gamma_1+2u)\zeta(1+\gamma_2+2v)}\frac{du}{u^{i+1}}\frac{dv}{v^{j+1}},
\end{eqnarray*}
so that
\begin{equation*}
J_{2}^{+}(\alpha,\beta)=\frac{\varphi^{+}(q)}{L^2}\sum_{i,j\geq1}\frac{b_ib_ji!j!}{(\log y_2)^{i+j}}\frac{\partial^2}{\partial\gamma_1\partial\gamma_2}J_2'(\alpha,\beta,\gamma_1,\gamma_2)\bigg|_{\gamma_1=\gamma_2=0}+O(q^{1-\varepsilon}).
\end{equation*}

We now take the derivative with respect to $\gamma_1,\gamma_2$ and set $\gamma_1=\gamma_2=0$. We first note that by moving the contours to $\textrm{Re}(u)=\textrm{Re}(v)\asymp L^{-1}$ and bounding the integral with absolute values, we get $J_2'(\alpha,\beta,\gamma_1,\gamma_2)\ll L^{i+j}$. Hence by the Cauchy theorem
\begin{equation*}
\frac{\partial^2}{\partial \gamma_1\partial \gamma_2}J_2'(\alpha,\beta,\gamma_1,\gamma_2)\bigg|_{\gamma_1=\gamma_2=0}=\zeta(1+\alpha+\beta)\bigg(L_{21}(\alpha,\beta)+L_{22}(\alpha,\beta)\bigg)+O(L^{i+j+1}),
\end{equation*}
where
\begin{eqnarray*}
L_{21}(\alpha,\beta)&=&\bigg(\frac{1}{2\pi i}\bigg)^2\int\int_{(L^{-1})}y_{2}^{u+v}C(\alpha,\beta,0,0,u,v,0)\\
&&\qquad\bigg(\zeta''(1+u+v)-\frac{\zeta'(1+u+v)^2}{\zeta(1+u+v)}\bigg)\frac{1}{\zeta(1+\beta+u)\zeta(1+\alpha+v)}\frac{du}{u^{i+1}}\frac{dv}{v^{j+1}}
\end{eqnarray*}
and
\begin{eqnarray*}
L_{22}(\alpha,\beta)&=&\bigg(\frac{1}{2\pi i}\bigg)^2\int\int_{(L^{-1})}y_{2}^{u+v}C(\alpha,\beta,0,0,u,v,0)\zeta(1+u+v)\\
&&\qquad\bigg(\frac{\zeta'(1+\alpha+u)}{\zeta(1+\alpha+u)\zeta(1+\beta+u)}-\frac{\zeta'(1+2u)}{\zeta(1+\beta+u)\zeta(1+2u)}\bigg)\\
&&\qquad\qquad\quad\bigg(\frac{\zeta'(1+\beta+v)}{\zeta(1+\alpha+v)\zeta(1+\beta+v)}-\frac{\zeta'(1+2v)}{\zeta(1+\alpha+v)\zeta(1+2v)}\bigg)\frac{du}{u^{i+1}}\frac{dv}{v^{j+1}}.
\end{eqnarray*}

\subsection{Evaluation of $L_{21}(\alpha,\beta)$ and $L_{22}(\alpha,\beta)$}

\begin{lemma}
With $L_{21}(\alpha,\beta)$ defined above we have
\begin{eqnarray*}
L_{21}(\alpha,\beta)&=&\frac{(\log y_2)^{i+j+1}}{2}\frac{d^2}{dadb}\int_{0}^{1}y_{2}^{\alpha b+\beta a}(1-x)^2\frac{(x+a)^{i}}{i!}\frac{(x+b)^{j}}{j!}dx\bigg|_{a=b=0}\\
&&\qquad\qquad+O(L^{i+j})+O(L^{i+\varepsilon})+O(L^{j+\varepsilon}).
\end{eqnarray*}
\end{lemma}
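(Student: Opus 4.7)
The approach parallels that of Lemma 5.1, with the novel feature that $L_{21}(\alpha,\beta)$ contains the coupled factor $\zeta''(1+u+v)-\zeta'(1+u+v)^2/\zeta(1+u+v)$, which does not split as a product of a function of $u$ and a function of $v$. The first step is to replace $C(\alpha,\beta,0,0,u,v,0)$ by its value at $u=v=0$; the difference carries an extra factor of $u$ or $v$ and saves a power of $L$, yielding an admissible error $O(L^{i+j})$. The identity $C(0,0,0,0,0,0,0)=1$ is verified at the end by the same Euler-product computation used in Sections 4 and 5.

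To decouple $u$ and $v$ I would use the identity
\[
\zeta''(1+u+v) - \frac{\zeta'(1+u+v)^2}{\zeta(1+u+v)} = \zeta(1+u+v)\bigg(\frac{\zeta'}{\zeta}\bigg)'(1+u+v) = \sum_{n\geq 1}\frac{\Lambda_2(n)}{n^{1+u+v}},
\]
where $\Lambda_2(n) = \sum_{d\mid n}\Lambda(d)\log d$; the Dirichlet series converges on the contours $\textrm{Re}(u)=\textrm{Re}(v)=L^{-1}$. Interchanging summation and integration gives
\[
L_{21}(\alpha,\beta) = \sum_{n\geq 1}\frac{\Lambda_2(n)}{n}N_1(n)N_2(n) + O(L^{i+j}),
\]
where $N_1(n) = \frac{1}{2\pi i}\int_{(L^{-1})}(y_2/n)^u\zeta(1+\beta+u)^{-1}\frac{du}{u^{i+1}}$ and $N_2(n)$ is its analogue with $(\beta,i,u)$ replaced by $(\alpha,j,v)$. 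Shifting the contour far to the right shows that the contribution from $n>y_2$ is negligible. For $n\leq y_2$, the PNT-type argument from the proof of Lemma 5.1, applied here to the simpler integrand without a $\zeta'$-factor in the numerator, yields
\[
N_1(n) = \frac{\beta(\log y_2/n)^{i}}{i!}+\frac{(\log y_2/n)^{i-1}}{(i-1)!}+O(L^{i-2})+O\big((y_2/n)^{-\nu}L^{-1+\varepsilon}\big),
\]
for some $\nu\asymp(\log\log q)^{-1}$, and similarly for $N_2(n)$.

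The sum over $n$ is then handled by the $k=3$ analogue of Lemma 3.6: via partial summation together with the standard asymptotic $\sum_{n\leq x}\Lambda_2(n)\sim \tfrac12 x(\log x)^2$, one has
\[
\sum_{n\leq y_2}\frac{\Lambda_2(n)}{n}F\bigg(\frac{\log y_2/n}{\log y_2}\bigg) = \frac{(\log y_2)^3}{2}\int_0^1(1-x)^2 F(x)\,dx + O(L^2)
\]
for any fixed smooth $F$. Inserting the main parts of $N_1(n)N_2(n)$, writing $x=\log(y_2/n)/\log y_2$, and observing that the four-term product reconstructs as
\[
\frac{d^2}{da\,db}\bigg\{y_2^{\alpha b+\beta a}\frac{(x+a)^{i}}{i!}\frac{(x+b)^{j}}{j!}\bigg\}\bigg|_{a=b=0}
\]
up to an overall factor of $(\log y_2)^{i+j-2}$, delivers the compact form stated in the lemma with leading coefficient $(\log y_2)^{i+j+1}/2$.

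The main obstacle will be the error accounting. Specifically, one must establish the bound $\sum_{n\leq y_2}\Lambda_2(n)(y_2/n)^{-\nu}/n\ll L^{2+\varepsilon}$, so that the $O((y_2/n)^{-\nu}L^{-1+\varepsilon})$ pieces of $N_1(n)$ and $N_2(n)$, when multiplied by the main part of the other factor and summed against $\Lambda_2(n)/n$, produce exactly the $O(L^{j+\varepsilon})$ and $O(L^{i+\varepsilon})$ contributions; the product of the two $O(L^{i-2})$, $O(L^{j-2})$ secondary errors combines with the $L^3$ weight from the $\Lambda_2$-sum to give the remaining $O(L^{i+j})$ term. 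A convenient feature of the Dirichlet-series decoupling is that it bypasses any explicit triple-pole residue analysis at $u+v=0$.
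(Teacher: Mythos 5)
Your approach matches what the paper has in mind for this case: after the short proof of Lemma 6.2 the paper simply says ``the case of $L_{21}(\alpha,\beta)$ can be treated similarly,'' and the similar treatment is exactly yours — replace $C$ by its value at the origin, decouple $u$ and $v$ by writing the coupled zeta factor as a Dirichlet series (here $\zeta''-\zeta'^2/\zeta=\zeta\cdot(\zeta'/\zeta)'=\sum_n\Lambda_2(n)/n^{1+u+v}$, replacing the $\zeta(1+u+v)=\sum_n n^{-1-u-v}$ used for $L_{22}$), evaluate the resulting one-dimensional integrals by the PNT-style contour shift, and re-sum against $\Lambda_2(n)/n$ via a Lemma 3.6-type mean value, whose $(1-x)^2$ weight and $(\log y_2)^3/2$ prefactor follow correctly from $\sum_{n\le T}\Lambda_2(n)/n\sim(\log T)^3/6$. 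One small bookkeeping slip: the contour-shift error in $N_1(n)$ is $O\big((y_2/n)^{-\nu}L^{\varepsilon}\big)$ rather than $O\big((y_2/n)^{-\nu}L^{-1+\varepsilon}\big)$ — on the segment $\Gamma_3$ the factor $1/\zeta(1+\beta+u)$ is no longer $\ll L^{-1}$ away from $t=0$ — but after summing against $\Lambda_2(n)/n$ this still yields an error $o(L^{i+j+1})$, so the lemma stands (with $\varepsilon$ adjusted).
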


\begin{lemma}
With $L_{22}(\alpha,\beta)$ defined above we have
\begin{eqnarray}\label{200}
&&\!\!\!\!\!\!\!\!L_{22}(\alpha,\beta)=(\log y_2)^{i+j+1}\frac{d^2}{dadb}\bigg\{\int_{0}^{1}\int_{0}^{x}\int_{0}^{x}y_{2}^{\alpha b+\beta a-\alpha u-\beta v}\frac{(x-u+a)^i}{i!}\frac{(x-v+b)^j}{j!}dudx\nonumber\\
&&\qquad-\tfrac{1}{2}\int_{0}^{1}\int_{0}^{x}y_2^{\alpha b+\beta a-\alpha u}\frac{(x-u+a)^i}{i!}\frac{(x+b)^{j+1}}{(j+1)!}dudx\\
&&\qquad-\tfrac{1}{2}\int_{0}^{1}\int_{0}^{x}y_2^{\alpha a+\beta b-\beta u}\frac{ (x+b)^{i+1}}{(i+1)!}\frac{(x-u+a)^j}{j!}dudx\nonumber\\
&&\qquad+\tfrac{1}{4}\int_{0}^{1}y_{2}^{\alpha b+\beta a}\frac{(x+a)^{i+1}}{(i+1)!}\frac{ (x+b)^{j+1}}{(j+1)!}dx\bigg\}\bigg|_{a=b=0}+O(L^{i+j})+O(L^{i+\varepsilon})+O(L^{j+\varepsilon}).\nonumber
\end{eqnarray}
\end{lemma}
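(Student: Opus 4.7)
The plan is to parallel the proof of Lemma 4.2: replace the arithmetic factor by its value at the origin, separate the variables $u$ and $v$ by a Dirichlet-series expansion, reduce to a product of two one-dimensional contour integrals, and then evaluate each factor by the method used for $N_{12}$ in Section 5. First I replace $C(\alpha,\beta,0,0,u,v,0)$ by $C(\alpha,\beta,0,0,0,0,0)$, committing error $O(L^{i+j})$; a direct check analogous to the end of Section 5 gives $C(0,0,0,0,0,0,0)=1$, which will be used at the end. To decouple $u$ and $v$ I expand $\zeta(1+u+v)=\sum_{n\ge1}n^{-1-u-v}$ and interchange, truncating at $n\leq y_2$ via the tail estimate from shifting contours to the right. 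After factoring $1/\zeta(1+\beta+u)$ and $1/\zeta(1+\alpha+v)$ out of the two brackets, this reduces $L_{22}(\alpha,\beta)$ to $\sum_{n\leq y_2}\tfrac{1}{n}N_{21}(n)N_{22}(n)$ plus acceptable error, where
\begin{equation*}
N_{21}(n)=\frac{1}{2\pi i}\int_{(L^{-1})}(y_2/n)^{u}\frac{1}{\zeta(1+\beta+u)}\bigg(\frac{\zeta'(1+\alpha+u)}{\zeta(1+\alpha+u)}-\frac{\zeta'(1+2u)}{\zeta(1+2u)}\bigg)\frac{du}{u^{i+1}},
\end{equation*}
and $N_{22}(n)$ is the analogous $v$-integral with $(\alpha,i)$ and $(\beta,j)$ interchanged.

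Each of $N_{21}(n)$ and $N_{22}(n)$ is then handled exactly as $N_{12}$ in Lemma 4.2: shift the contour into the standard zero-free region to control $1/\zeta$, collect residues at the origin, and use the identity $\tfrac{1}{\alpha+u}=\int_{n/y_2}^{1}t^{\alpha+u-1}dt+(y_2/n)^{-\alpha-u}/(\alpha+u)$ (the second piece vanishing when the contour is taken arbitrarily large) to trade the pole at $u=-\alpha$ for an integral. Since the bracket is a difference of two logarithmic derivatives, $N_{21}(n)$ splits naturally as $N_{21}(n)=I_{21}(n)+R_{21}(n)+\text{errors}$: an ``integral part'' $I_{21}$ (from $\zeta'/\zeta(1+\alpha+u)$) involving an extra parameter and a factor $(y_2/n)^{-\alpha(1-w)}$, plus a ``residue part'' $R_{21}$ (from $\zeta'/\zeta(1+2u)$) contributing a pure polynomial in $\log(y_2/n)$ with coefficient $\tfrac{1}{2}$. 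The same structure holds for $N_{22}(n)$ under the swap $\alpha\leftrightarrow\beta,\ i\leftrightarrow j$.

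Multiplying $N_{21}(n)N_{22}(n)=I_{21}I_{22}+I_{21}R_{22}+R_{21}I_{22}+R_{21}R_{22}$ produces four cross-terms which will give, in order, the four displayed terms in \eqref{200}, with the $1,-\tfrac{1}{2},-\tfrac{1}{2},\tfrac{1}{4}$ coefficients arising from the explicit residue computations. Summing each cross-term against $1/n$ via Lemma 3.6 (with $k=1$) converts the $n$-sum into an integral over $x\in[0,1]$, and the $(\alpha,\beta)$-dependent linear factors of the form $\beta(\log y_2)(\ldots)^{i}/i!+(\ldots)^{i-1}/(i-1)!$ that appeared in Lemma 4.2 are then compactly packaged into the operator $\frac{d^{2}}{da\,db}\big|_{a=b=0}$ acting on $y_2^{\alpha b+\beta a+\cdots}$ times the appropriate polynomial factors $(x-u+a)^{i}/i!$, $(x-v+b)^{j}/j!$ or their antiderivatives $(x+b)^{j+1}/(j+1)!$. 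Finally, the error contributions ($O(L^{i-1})$ and $O((y_2/n)^{-\nu}L^{\varepsilon})$ on the $u$-side, analogously on the $v$-side) combine via Lemma 3.7 to yield the stated $O(L^{i+j})+O(L^{i+\varepsilon})+O(L^{j+\varepsilon})$.

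The main obstacle will be the bookkeeping: tracking signs, factorials, and the correct pairings of the four cross-terms with the four integrals in \eqref{200}, and in particular the asymmetric handling of $I_{21}R_{22}$ and $R_{21}I_{22}$ in which only one of the two factors carries a Dirichlet integration variable and the factor $\log(y_2/n)$ appears from only one side. Once the four contributions are correctly computed and matched, rewriting them under $\frac{d^{2}}{da\,db}\big|_{a=b=0}$ and invoking $C(0,0,0,0,0,0,0)=1$ completes the proof.
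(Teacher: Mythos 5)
Your proposal follows essentially the same path as the paper: replace the arithmetic factor by its value at the origin, expand $\zeta(1+u+v)$ as a Dirichlet series to separate $u$ and $v$, factor each one-variable contour integral into an ``integral part'' (from $\zeta'/\zeta(1+\alpha+u)$, handled by the $\int t^{\alpha+u-1}dt$ identity) plus a ``residue part'' (from $\zeta'/\zeta(1+2u)$, contributing the coefficient $\tfrac12$), multiply out the four cross-terms, and sum over $n$ via Lemmas 3.6 and 3.7. The only slip is a citation of ``Lemma 4.2'' where you mean the evaluation of $N_{12}$ from the proof of Lemma 5.1; the substance is identical to the paper's argument.
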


\begin{proof}
We shall illustrate the proof for $L_{22}(\alpha,\beta)$. The case of $L_{21}(\alpha,\beta)$ can be treated similarly.

We first note that by bounding the integral with absolute values, we get $L_{22}(\alpha,\beta)\ll L^{i+j+1}$. We denote by $L_{22}'(\alpha,\beta)$ the same integral as $L_{22}(\alpha,\beta)$ but with $C(\alpha,\beta,0,0,u,v,0)$ being replaced by $C(\alpha,\beta,0,0,0,0,0)$. Then we have $L_{22}(\alpha,\beta)=L_{22}'(\alpha,\beta)+O(L^{i+j})$. We will later check that $C(0,0,0,0,0,0,0)=1$ (see the end of the section), a result we will use freely from now on.

Changing the orders of summation and integration we have
\begin{equation}\label{201}
L_{22}'(\alpha,\beta)=\sum_{n\leq y_2}\frac{1}{n}N_2(\alpha,\beta,i)N_2(\beta,\alpha,j),
\end{equation}
where
\begin{equation*}
N_2(\alpha,\beta,i)=\frac{1}{2\pi i}\int_{(L^{-1})}\bigg(\frac{y_{2}}{n}\bigg)^{u}\bigg(\frac{\zeta'(1+\alpha+u)}{\zeta(1+\alpha+u)\zeta(1+\beta+u)}-\frac{\zeta'(1+2u)}{\zeta(1+\beta+u)\zeta(1+2u)}\bigg)\frac{du}{u^{i+1}}.
\end{equation*}
This integral has been implicitly evaluated in the proof of Lemma 5.1. Collecting the information from that we get
\begin{eqnarray*}
N_2(\alpha,\beta,i)&=&-\bigg(\log\frac{y_2}{n}\bigg)\int_{0}^{1}\bigg(\frac{y_2}{n}\bigg)^{-\alpha(1-u)}\bigg(\frac{\beta(u\log y_2/n)^i}{i!}+\frac{(u\log y_2/n)^{i-1}}{(i-1)!}\bigg)du\\
&&\qquad+\tfrac{1}{2}\bigg(\frac{\beta(\log y_2/n)^{i+1}}{(i+1)!}+\frac{(\log y_2/n)^{i}}{i!}\bigg)+O(L^{i-1})+O\bigg(\bigg(\frac{y_2}{n}\bigg)^{-\nu}L^{\varepsilon}\bigg).
\end{eqnarray*}
Hence
\begin{equation*}
N_2(\alpha,\beta,i)N_2(\beta,\alpha,j)=N_{21}+N_{22}+N_{23}+N_{24}+O(L^{i+j-1})+O\bigg(\bigg(\frac{y_2}{n}\bigg)^{-\nu}L^{i+\varepsilon}\bigg)+O\bigg(\bigg(\frac{y_2}{n}\bigg)^{-\nu}L^{j+\varepsilon}\bigg),
\end{equation*}
where
\begin{eqnarray*}
N_{21}&=&\bigg(\log\frac{y_2}{n}\bigg)^2\int_{0}^{1}\int_{0}^{1}\bigg(\frac{y_2}{n}\bigg)^{-\alpha(1-u)-\beta(1-v)}\bigg(\frac{\beta(u\log y_2/n)^i}{i!}+\frac{(u\log y_2/n)^{i-1}}{(i-1)!}\bigg)\\
&&\qquad\qquad\qquad\qquad\bigg(\frac{\alpha(v\log y_2/n)^j}{j!}+\frac{(v\log y_2/n)^{j-1}}{(j-1)!}\bigg)dudv\\
N_{22}&=&-\tfrac{1}{2}\bigg(\log\frac{y_2}{n}\bigg)\int_{0}^{1}\bigg(\frac{y_2}{n}\bigg)^{-\alpha(1-u)}\bigg(\frac{\beta(u\log y_2/n)^i}{i!}+\frac{(u\log y_2/n)^{i-1}}{(i-1)!}\bigg)\\
&&\qquad\qquad\qquad\qquad\bigg(\frac{\alpha(\log y_2/n)^{j+1}}{(j+1)!}+\frac{(\log y_2/n)^{j}}{j!}\bigg)du\\
N_{23}&=&-\tfrac{1}{2}\bigg(\log\frac{y_2}{n}\bigg)\int_{0}^{1}\bigg(\frac{y_2}{n}\bigg)^{-\beta(1-v)}\bigg(\frac{\beta(\log y_2/n)^{i+1}}{(i+1)!}+\frac{(\log y_2/n)^{i}}{i!}\bigg)\\
&&\qquad\qquad\qquad\qquad\bigg(\frac{\alpha(v\log y_2/n)^{j}}{j!}+\frac{(v\log y_2/n)^{j-1}}{(j-1)!}\bigg)dv,\\
\end{eqnarray*}
and
\begin{equation*}
N_{24}=\tfrac{1}{4}\bigg(\frac{\beta(\log y_2/n)^{i+1}}{(i+1)!}+\frac{(\log y_2/n)^{i}}{i!}\bigg)\bigg(\frac{\alpha(\log y_2/n)^{j+1}}{(j+1)!}+\frac{(\log y_2/n)^{j}}{j!}\bigg).
\end{equation*}
By Lemma 3.7, the contribution of the $O$-terms to $L_{22}'(\alpha,\beta)$ in \eqref{201} is
\begin{equation*}
\ll O(L^{i+j})+O(L^{i+\varepsilon})+O(L^{j+\varepsilon}).
\end{equation*}
For the main terms, we use Lemma 3.6. Simplifying we obtain \eqref{200}.
\end{proof}

\subsection{Simplification} 

We collect the evaluations of $L_{21}(\alpha,\beta)$ and $L_{22}(\alpha,\beta)$ and have
\begin{equation*}
J_{2}^{+}(\alpha,\beta)=\frac{\vartheta_2\varphi^{+}(q)\zeta(1+\alpha+\beta)}{L}J_{2}''(\alpha,\beta)+O(qL^{-1+\varepsilon}),
\end{equation*}
where
\begin{eqnarray*}
J_{2}''(\alpha,\beta)&=&\frac{d^2}{dadb}\bigg\{\tfrac{1}{2}\int_{0}^{1}y_{2}^{\alpha b+\beta a}(1-x)^2Q(x+a)Q(x+b)dx\\
&&\!\!\!\!\!\!\!\!\!\!\!\!\!\!\!\!\!\!\!\!\!\!\!\!\!+\int_{0}^{1}\int_{0}^{x}\int_{0}^{x}y_{2}^{\alpha b+\beta a-\alpha u-\beta v}Q(x-u+a)Q(x-v+b)dudvdx\\
&&\!\!\!\!\!\!\!\!\!\!\!\!\!\!\!\!\!\!\!\!\!\!\!\!\!-\tfrac{1}{2}\int_{0}^{1}\int_{0}^{x}y_{2}^{\alpha b+\beta a-\alpha u}Q(x-u+a)Q_1(x+b)dudx\\
&&\!\!\!\!\!\!\!\!\!\!\!\!\!\!\!\!\!\!\!\!\!\!\!\!\!-\tfrac{1}{2}\int_{0}^{1}\int_{0}^{x}y_{2}^{\alpha a+\beta b-\beta u}Q(x-u+a)Q_1(x+b)dudx+\tfrac{1}{4}\int_{0}^{1}y_{2}^{\alpha b+\beta a}Q_1(x+a)Q_1(x+b)dx\bigg\}\bigg|_{a=b=0}.
\end{eqnarray*}

Next we combine $J_{2}^{+}(\alpha,\beta)$ and $J_{2}^{-}(\alpha,\beta)$. Essentially we have $J_{2}^{-}(\alpha,\beta)=\big(q^{-\alpha-\beta}+O(L^{-1})\big)J_{2}^{+}(-\beta,-\alpha)$. We proceed as in the previous section. Writing
\begin{equation*}
V(\alpha,\beta;u,v)=\frac{y_{2}^{\alpha b+\beta a-\alpha u-\beta v}-q^{-\alpha-\beta}y_{2}^{-\beta b-\alpha a+\beta u+\alpha v}}{\alpha+\beta}.
\end{equation*}
Using \eqref{24} we get
\begin{equation*}
V(\alpha,\beta;u,v)=Ly_{2}^{\alpha b+\beta a-\alpha u-\beta v}\big(1+\vartheta_2(a+b-u-v)\big)\int_{0}^{1}(qy_{2}^{a+b-u-v})^{-(\alpha+\beta)t}dt.
\end{equation*}
Simplifying we obtain Lemma 2.3.

Finally we need to verify that $C(0,0,0,0,0,0,0)=1$. Taking $\alpha=\beta=\gamma_1=\gamma_2=0$ and $u=v=s$ we have
\begin{equation*}
C(0,0,0,0,s,s,s)={\sum_{ml_2h_2k_1=nl_1h_1k_2}\!\!\!\!\!\!\!\!\!\!\!\!\!}^{*}\ \ \ \ \ \ \frac{\mu(h_1)\mu(h_2)\mu(k_1)\mu(k_2)}{(mnl_1l_2h_1h_2)^{1/2+s}}=1,
\end{equation*}
for all $s$. This completes the proof of Lemma 2.3.

\specialsection*{Acknowledgments}
The author is grateful to Professor Heath-Brown for many helpful and interesting insights into the mollifer method. The question considered in this paper stemmed from a joint project with Professor Milinovich. Thanks also go to him for various stimulating discussions on the topic.

\end{document}